\documentclass[a4paper,notitlepage,twoside,leqno,10pt]{amsart}

\usepackage{bbm,pifont,latexsym}

\usepackage{dcolumn,indentfirst}
\usepackage[bookmarksopen=true,linktocpage=true,pdfstartview={XYZ null null 1.25}]{hyperref}
\usepackage{amsmath,amssymb,amscd,amsthm,amsfonts,mathrsfs}
\usepackage{color,graphicx,xcolor,graphics,subfigure,extarrows,caption2}
\usepackage{titletoc}

\newtheorem{thm}{Theorem}[section]
\newtheorem{lema}[thm]{Lemma}
\newtheorem{cor}[thm]{Corollary}
\newtheorem{prop}[thm]{Proposition}

\theoremstyle{definition}
\newtheorem*{defi}{Definition}

\newcommand{\D}{\mathbb{D}}
\newcommand{\T}{\mathbb{T}}
\newcommand{\R}{\mathbb{R}}

\newcommand{\C}{\mathbb{C}}
\newcommand{\N}{\mathbb{N}}
\newcommand{\EC}{\widehat{\mathbb{C}}}

\newcommand{\diam}{\textup{diam}}
\newcommand{\dist}{\textup{dist}}

\newcommand{\Int}{\textup{int}}

\newcommand{\ii}{\textup{i}}

\usepackage{enumerate,enumitem}

\makeatletter\@addtoreset{equation}{section}\makeatother

\begin{document}

\author[W. Qiu]{WEIYUAN QIU}
\address{School of Mathematical Sciences, Fudan University, Shanghai 200433, P. R. China}
\email{wyqiu@fudan.edu.cn}

\author[F. Yang]{FEI YANG}
\address{Department of Mathematics, Nanjing University, Nanjing 210093, P. R. China}
\email{yangfei@nju.edu.cn}

\author[Y. Yin]{YONGCHENG YIN}
\address{School of Mathematical Sciences, Zhejiang University, Hangzhou 310027, P. R. China}
\email{yin@zju.edu.cn}

\title[QUASISYMMETRIC GEOMETRY OF JULIA SETS]{QUASISYMMETRIC GEOMETRY OF THE JULIA SETS OF MCMULLEN MAPS}

\begin{abstract}
We study the quasisymmetric geometry of the Julia sets of McMullen maps $f_\lambda(z)=z^m+\lambda/z^\ell$, where $\ell$, $m\geq 2$ are integers satisfying $1/\ell+1/m<1$ and $\lambda\in\mathbb{C}\setminus\{0\}$. If the free critical points of $f_\lambda$ are escaped to the infinity, we prove that the Julia set $J_\lambda$ of $f_\lambda$ is quasisymmetrically equivalent to either a standard Cantor set, a standard Cantor set of circles or a round Sierpi\'{n}ski carpet (which is also standard in some sense). If the free critical points are not escaped, we give a sufficient condition on $\lambda$ such that $J_\lambda$ is a Sierpi\'{n}ski carpet and prove that most of them are quasisymmetrically equivalent to some round carpets. In particular, there exist infinitely renormalizable rational maps whose Julia sets are quasisymmetrically equivalent to round carpets.
\end{abstract}

\subjclass[2010]{Primary: 37F45; Secondary: 37F10, 37F25}

\keywords{Julia sets; Sierpi\'{n}ski carpet; quasisymmetrically equivalent}

\date{\today}


\dedicatory{Dedicated to the memory of Professor Tan Lei}

\maketitle

\tableofcontents

\section{Introduction}

Let $\ell,m\geq 2$ be two positive integers satisfying $1/\ell+1/m<1$ and $I_0=[0,1]\subset\mathbb{R}$ the closed unit interval in the real line. We subdivide $I_0$ into 3 subintervals with sizes $1/\ell$, $1-1/\ell-1/m$, $1/m$ from left to right in the obvious way and then remove the interior of middle subinterval. The resulting set $I_1$ is the union of two intervals $[0,1/\ell]\cup[1-1/m,1]$. Let $g_0(x)=(1-x)/\ell$ and $g_1(x)=1+(x-1)/m$, where $x\in[0,1]$. We have $I_1=g_0(I_0)\cup g_1(I_0)$. For each $n\geq 1$, define $I_n:=g_0(I_{n-1})\cup g_1(I_{n-1})$. Then each $I_n$ is compact and inductively $I_n\subset I_{n-1}$. The \textit{standard} Cantor set $C_{\ell,m}$ is the intersection of all $I_n$, which is compact and totally disconnected. In particular, $C_{3,3}$ is the \emph{standard} middle third Cantor set.

The \textit{standard Cantor circles} $A_{\ell,m}$ is defined by $C_{\ell,m}\times\mathbb{T}$, where $\mathbb{T}:=\{z\in\mathbb{C}:|z|=1\}$ is the unit circle. A compact subset on the Riemann sphere $\EC =\mathbb{C}\cup\{0\}$ is called a \textit{Cantor set of circles} (or \textit{Cantor circles} in short) if there exists a homeomorphism between this compact set and the standard middle third Cantor circles $A_{3,3}$.

According to \cite{Why58}, a connected and locally connected compact set $S$ in $\EC$ is called a \emph{Sierpi\'{n}ski carpet} (or \emph{carpet} in short) if it has empty interior and can be written as $S=\EC \setminus \bigcup_{i\in\mathbb{N}}D_i$, where $\{D_i\}_{i\in\mathbb{N}}$ are Jordan regions satisfying $\partial D_i\cap\partial D_j=\emptyset$ for $i\neq j$ and the spherical diameter $\text{diam}(\partial D_i)\rightarrow 0$ as $i\rightarrow \infty$. The collection of the boundaries of the Jordan regions $\{\partial D_i\}_{i\in\N}$ are called the \emph{peripheral circles} of $S$. All Sierpi\'{n}ski carpets are homeomorphic to each other. A carpet is called \textit{round} if the peripheral circles are all spherical (or Euclidean) circles.

If it is known that two metric spaces $(X,d_X)$ and $(Y,d_Y)$ are homeomorphic, a basic question in quasiconformal geometry is to determine whether these two spaces are \textit{quasisymmetrically equivalent} (\textit{q.s.\,equivalent} in short) to each other. This means that there exist a homeomorphism $f:X\rightarrow Y$ and a distortion control function $\eta:[0,\infty)\rightarrow [0,\infty)$ which is also a homeomorphism such that
\begin{equation*}
\frac{d_Y(f(x),f(a))}{d_Y(f(x),f(b))}\leq \eta\left(\frac{d_X(x,a)}{d_X(x,b)}\right)
\end{equation*}
for every distinct points $x$, $a$, $b\in X$ (see for example, \cite[p.\,110]{Hei01}).

The study of the topological properties of the Julia sets of rational maps, including the connectivity and locally connectivity etc., is one of the important problems in complex dynamics. However, for the study of the quasisymmetric properties of the Julia sets of rational maps, the corresponding results appear much fewer. The first example of quasisymmetrically inequivalent hyperbolic Julia sets was given by Ha\"{i}ssinsky and Pilgrim in \cite{HP12b}. They proved that there exist quasisymmetrically inequivalent Cantor circles as the Julia sets of rational maps. Recently, Bonk, Lyubich and Merenkov studied the quasisymmetries of the Sierpi\'{n}ski carpets which arise as the Julia sets of critically finite rational maps \cite{BLM16}. See also \cite{QYZ19} for the generalization of the corresponding results to the critically infinite case. In this article, we study the quasisconformal geometry and give a quasisymmetric classification of the Julia sets of \emph{McMullen maps}
\begin{equation}\label{McMullen}
f_\lambda(z)=z^m+\lambda/z^\ell,
\end{equation}
where $\ell$, $m\geq 2$ are integers satisfying $1/\ell+1/m<1$ and $\lambda\in\mathbb{C}^*:=\mathbb{C}\setminus\{0\}$. The dynamical behaviors of $f_\lambda$ have been studied extensively recently (see \cite{Dev13, DL05, DLU05, DR13, QWY12, QXY12} and the references therein).

For the topological properties of the Julia set $J_\lambda$ of $f_\lambda$, Devaney, Look and Uminsky proved an Escape Trichotomy Theorem which asserts that if all the critical points of $f_\lambda$ are attracted by $\infty$, then $J_\lambda$ is either a Cantor set, a Cantor set of circles or a Sierpi\'{n}ski carpet (see \cite[Theorem 0.1]{DLU05} or Theorem \ref{E-T-T}). For the quasisymmetric geometric properties of $J_\lambda$, we have the following theorem.

\begin{thm}\label{qs-classifi}
Suppose that all the critical points of $f_\lambda$ are attracted by $\infty$. Then one and only one of the following three cases happens:
\begin{enumerate}
\item $J_\lambda$ is a Cantor set and q.s.\,equivalent to the standard middle third Cantor set $C_{3,3}$;
\item $J_\lambda$ is a Cantor set of circles and q.s.\,equivalent to the standard Cantor circles $A_{\ell,m}$;
\item $J_\lambda$ is a Sierpi\'{n}ski carpet and q.s.\,equivalent to a round carpet.
\end{enumerate}
\end{thm}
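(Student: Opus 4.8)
The plan is to treat the three cases of the Escape Trichotomy separately, in each case exhibiting the required quasisymmetry by combining the dynamical self-similarity of $J_\lambda$ with a uniformization-type result. The overall strategy is: first produce a natural itinerary (Markov) coding of $J_\lambda$ coming from the escaping dynamics of $f_\lambda$, and then show this coding is bi-Lipschitz (in the appropriate sense), or quasisymmetrically, compatible with the analogous coding of the model set.

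\emph{Case (1): Cantor set.} When all critical points (equivalently, the free critical points) land in the immediate basin of $\infty$ at the first step, $f_\lambda$ restricted to a neighborhood of $J_\lambda$ is conjugate to a hyperbolic "full shift"-type system; concretely, there are finitely many disjoint topological disks $U_1,\dots,U_k$ each mapped univalently with bounded distortion over their union, and $J_\lambda=\bigcap_n f_\lambda^{-n}(\bigcup U_i)$. The point is that $f_\lambda$ is uniformly expanding on $J_\lambda$ (hyperbolicity), so the inverse branches are uniform contractions with uniformly bounded distortion. A standard argument then shows any two such "expanding Cantor repellers" with the same number of branches are quasisymmetrically equivalent; in particular $J_\lambda$ is quasisymmetric to a linear self-similar Cantor set, which in turn is quasisymmetric to $C_{3,3}$ (all Cantor sets that are Ahlfors regular and uniformly perfect/uniformly disconnected are QS equivalent, and $C_{3,3}$ is such). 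I would invoke the characterization of quasisymmetric images of $C_{3,3}$ as exactly the uniformly perfect, uniformly disconnected, doubling metric spaces (David–Semmes / MacManus) and verify $J_\lambda$ has these three properties from hyperbolicity.

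\emph{Case (2): Cantor circles.} Here $J_\lambda=\bigcap_n f_\lambda^{-n}(\Omega)$ where $\Omega$ is a finite union of round annuli, and $f_\lambda$ maps $J_\lambda$ to itself with an annular Markov structure: each complementary annulus of $J_\lambda$ at level $n$ is mapped by a bounded-distortion (indeed, by the symmetry $z\mapsto \eta z$ and $z\mapsto \lambda^{1/(l+m)}/z$, conformally) onto one at level $n-1$. Using the $(l+m)$-fold rotational symmetry and the near-radial structure, one sees $J_\lambda$ is, up to a QC (hence QS on $J_\lambda$) change of coordinates, a nested family of concentric round circles whose radii shrink at two possible rates governed by the two relevant critical-orbit "return times," which are precisely the contraction ratios $1/l$ and $1/m$ in the definition of $A_{l,m}$. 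So the plan is to build the explicit QS map $J_\lambda\to A_{l,m}$ circle-by-circle, checking the distortion bound is uniform because the distortion of the dynamics is uniform and the moduli of the complementary annuli are bounded away from $0$ and $\infty$.

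\emph{Case (3): Sierpi\'nski carpet.} This is the main obstacle. Here $J_\lambda$ is a carpet, and we want it QS to a round carpet. The natural tool is Bonk's uniformization theorem for Sierpi\'nski carpets: a metric carpet of finite type (or: one that is Ahlfors $2$-regular and linearly locally connected, with uniform relative separation of peripheral circles and uniform quasiround-ness) is quasisymmetrically equivalent to a round carpet. So the plan is to verify that $J_\lambda$ satisfies Bonk's hypotheses. Ahlfors $2$-regularity and linear local connectivity follow from hyperbolicity/expansion of $f_\lambda$ on $J_\lambda$ together with its being a carpet Julia set (standard for hyperbolic rational maps with connected Julia set of empty interior); the uniform relative distance and uniform quasiround conditions on the peripheral circles — the $D_i$ are preimages of the immediate basin of $\infty$, hence images under bounded-distortion inverse branches of finitely many "seed" Jordan domains — also follow from the bounded geometry of the dynamics. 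The delicate point is confirming that hyperbolicity holds in all of case (3): but under the escaping hypothesis the orbit of every critical point tends to $\infty$, so the map is subhyperbolic with no critical points in $J_\lambda$ and no recurrent critical behavior, hence expanding on $J_\lambda$; I would make this precise and then quote Bonk's theorem to conclude. Finally, the "one and only one" clause is exactly the content of the Escape Trichotomy (Theorem \ref{E-T-T}), since the three topological types are mutually exclusive.

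I expect the real work to be in Case (3): extracting the precise geometric estimates (Ahlfors regularity with exponent $2$, the two-sided bound on moduli of the complementary annuli/peripheral separations, and the quasiround-ness of peripheral circles) from the expanding dynamics, in a form that exactly matches the hypotheses of the round-carpet uniformization theorem.
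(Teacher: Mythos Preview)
Your approaches to Cases~(1) and~(3) align with the paper's: for~(1) the paper also simply invokes the David--Semmes proposition that a hyperbolic Cantor Julia set is quasisymmetric to $C_{3,3}$, and for~(3) it likewise verifies the hypotheses of Bonk's uniformization theorem. (Note, though, that the version actually used, \cite[Corollary~1.2]{Bo}, requires only that the peripheral circles be uniform quasicircles and uniformly relatively separated; the Ahlfors $2$-regularity and linear local connectivity you list are not needed, so you would be doing extra work there.)

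Case~(2), however, is handled quite differently in the paper, and your sketch has a genuine gap. You assert that after a QC change of coordinates the radii of the circles in $J_\lambda$ shrink at ``precisely the contraction ratios $1/l$ and $1/m$.'' This is not correct: the moduli of the complementary annuli of $J_\lambda$ depend on $\lambda$ and are not in general those of $A_{l,m}$. Since Ha\"{i}ssinsky--Pilgrim \cite{HP} show that Cantor-circle Julia sets can be quasisymmetrically \emph{inequivalent}, you cannot conclude $J_\lambda$ is QS to $A_{l,m}$ from bounded distortion and the annular Markov structure alone; you must explain why the pair of covering degrees $(l,m)$ pins down the QS class. The paper sidesteps this by quasiconformal surgery: it first builds, by hand, a quasiregular branched cover $F$ of $\overline{\mathbb C}$ whose non-escaping set is \emph{literally} conformally equivalent to $A_{l,m}$ and whose dynamics on the two fundamental annuli have degrees $l$ and $m$ (Lemma~\ref{an-to-disk}); it then pulls back the standard complex structure, applies the measurable Riemann mapping theorem, and uses the $(l+m)$-fold rotational symmetry of $F$ to check that the straightened map is exactly some $f_{\lambda_0}$ (Proposition~\ref{straightening}). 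The integrating map is a global quasiconformal homeomorphism of $\overline{\mathbb C}$, hence quasisymmetric, carrying $A_{l,m}$ onto $J_{\lambda_0}$; one finishes by noting that all $\lambda$ in the McMullen domain give quasiconformally conjugate dynamics. If you want to salvage your direct circle-by-circle construction, you would effectively need to prove that any two expanding Cantor-circle repellers with the same ordered pair of covering degrees are QS-equivalent --- which is precisely what the surgery argument delivers for free.
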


Except 0 and $\infty$, the remaining $\ell+m$ critical points of $f_\lambda$ are called the \emph{free} critical points. These free critical points have the same orbit essentially (see \S \ref{escape-case}). They are attracted to $\infty$ or not at the same time. If the free critical points are not attracted to $\infty$, then $J_\lambda$ is connected (see \cite{DR13}). In this case, two natural questions arise: When $J_\lambda$ is a Sierpi\'{n}ski carpet? Is it q.s. equivalent to a round one?

The study of the quasisymmetric equivalences between the Sierpi\'{n}ski carpets and round carpets was partially motivated by the Kapovich-Kleiner conjecture in the geometry group theory. This conjecture is equivalent to the following statement: if the boundary of the infinity $\partial_\infty G$ of a Gromov hyperbolic group $G$ is a Sierpi\'{n}ski carpet, then $\partial_\infty G$ is q.s. equivalent to a round carpet in $\EC$. Recently, Bonk gave a sufficient condition on the Sierpi\'{n}ski carpets such that they can be q.s. equivalent to some round carpets (see \cite{Bon11}).

If the free critical orbits of $f_\lambda$ are bounded and one of them is attracted by an attracting periodic orbit, then $J_\lambda$ is a Sierpi\'{n}ski carpet if its Fatou components satisfy some `buried' properties (see \cite{DL05}). In order to find more specific $\lambda$ such that $J_\lambda$ is a Sierpi\'{n}ski carpet, we focus our attention on the global parameter space of $f_\lambda$. Define the \textit{non-escaping locus} of this family by
\begin{equation}\label{Lambda}
\Lambda=\{\lambda\in\mathbb{C}^*:~\text{The free critical orbits of $f_\lambda$ are bounded}\}.
\end{equation}
It was known that $\Lambda$ is connected and contains infinitely many small homeomorphic copies of the Mandelbrot set which correspond to the \textit{renormalizable} parameters (see \cite{Dev06,Ste06} and Figure \ref{Fig_McMullen_para}).

\begin{figure}[!htpb]
  \setlength{\unitlength}{1mm}
  \centering
  \includegraphics[height=55mm]{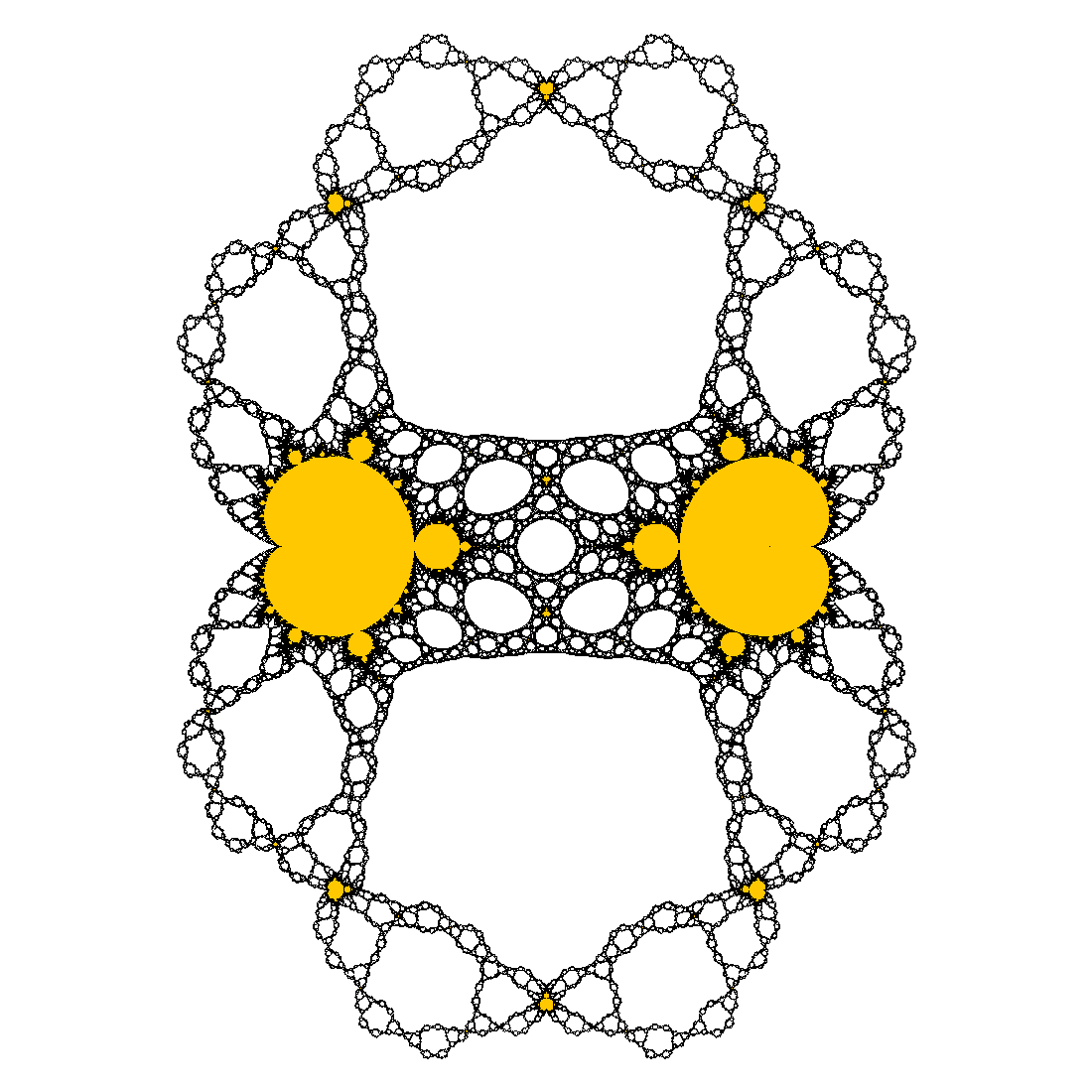}\hskip0.2cm
  \includegraphics[height=55mm]{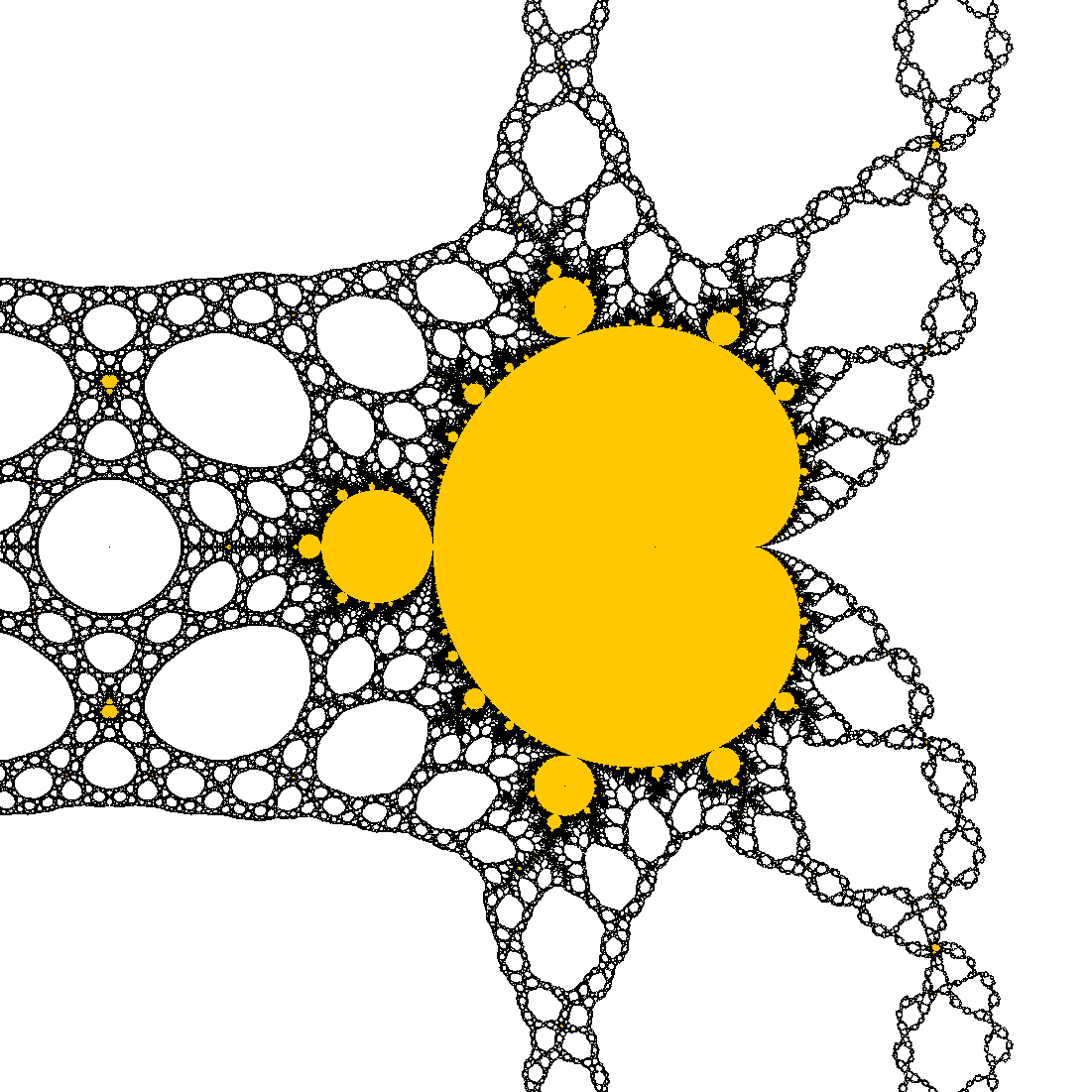}
  \caption{The non-escaping locus of $f_\lambda(z)=z^3+\lambda/z^3$ and its zoom near the positive real axis. A baby Mandelbrot set can be seen clearly in the picture on the right.}
  \label{Fig_McMullen_para}
\end{figure}

The Mandelbrot set $M$ is the collection of all the parameters $c$ such that the Julia set of $P_c(z)=z^2+c$ is connected. Let $H$ be a \emph{hyperbolic component} of $M$. For each $c\in H$, $P_c$ has an attracting periodic orbit with period depending only on $H$ but not on $c$. We use $p(H)$ to denote this period. There exists a unique parameter $r_H$ on the boundary of $H$ such that $P_{r_H}$ has a parabolic Fatou component with period $p(H)$. This point $r_H$ is called the \emph{root} of $H$. In particular, let
\begin{equation*}
H_\heartsuit=\{c\in M: P_c \text{ has an attracting fixed point}\}
\end{equation*}
be the hyperbolic component of $M$ with period 1. The subscript $\heartsuit$ depicts the shape of $H_\heartsuit$, which is surrounded by a cardioid.
The hyperbolic components of $M$ can be divided into two types: \textit{satellite} and \textit{primitive} (see definitions in \S \ref{Non-escap-case}). An intuitive difference between these two types of components is: the primitive hyperbolic component is surrounded by a cardioid, but the satellite one is not.

Since each hyperbolic component $\mathcal{H}$ in $\Lambda$ is contained in some maximal homeomorphic image of the Mandelbrot set (see \cite[Theorem 10]{Ste06}), we can also define the corresponding \textit{root} $r_{\mathcal{H}}$ of $\mathcal{H}$ and divide the hyperbolic components in $\Lambda$ into two types: satellite and primitive (see \S \ref{Non-escap-case}).

The question when $J_\lambda$ is a Sierpi\'{n}ski carpet was considered in the case of $\lambda>0$ in \cite{QXY12}. Let $\R^+$ be the positive real axis. Note that the non-escaping locus $\Lambda$ is compact. We denote
\begin{equation}\label{equ-lambda-0-1}
\lambda_0:=\min \{\Lambda\cap\R^+\} \text{ and } \lambda_1:=\max \{\Lambda\cap\R^+\}.
\end{equation}
Let $K_\lambda=\EC\setminus \mathcal{A}(\infty)$ be the \emph{filled-in Julia set} of $f_\lambda$, where $\mathcal{A}(\infty)$ is the attracting basin of $\infty$.

\begin{thm}[\cite{QXY12, Xie11}]\label{Qiu-Xie-Yin}
If $\lambda\in[\lambda_0,\lambda_1]$, then $J_\lambda$ is locally connected. Further, $J_\lambda$ is a Sierpi\'{n}ski carpet if and only if one of the following holds:
\begin{enumerate}
\item The interior $\Int (K_\lambda)=\emptyset$ and $\lambda\neq\lambda_0$; or
\item $\lambda\in\mathcal{H} \cup \{r_{\mathcal{H}}\}$, where $\mathcal{H}$ is a primitive hyperbolic component of a maximal copy of $M$ in $\Lambda$ such that $\mathcal{H}$ is not the image of $H_\heartsuit$ under the maximal homeomorphic map and $r_{\mathcal{H}}$ is the root of $\mathcal{H}$.
\end{enumerate}
\end{thm}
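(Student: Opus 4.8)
The plan is to work with $\lambda>0$ throughout and to exploit the two symmetries of $f_\lambda$: the order-$(l+m)$ rotational symmetry $f_\lambda(\eta z)=\eta^m f_\lambda(z)$ for $\eta^{l+m}=1$, and the reflection $f_\lambda(\bar z)=\overline{f_\lambda(z)}$. Together these normalize one free critical point onto the positive real axis and reduce the combinatorics of $f_\lambda$ to that of a real interval map on some $I\subset(0,\infty)$ bounded by a repelling fixed point and its preimages, with a single turning point. The constants $\lambda_0<\lambda_1$ are read off from this real picture: $\lambda_0$ is the first positive parameter at which the free critical orbit lands on $\partial B_\infty$, equivalently at which the closures of the immediate basin $B_\infty$ of $\infty$ and of the trap door $T_\lambda\ni 0$ first meet; and $[\lambda_0,\lambda_1]$ is the maximal interval on which the free critical orbit stays inside a fixed compact subinterval of $(0,\infty)$ carrying a Markov structure for $f_\lambda$.

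First I would prove local connectivity of $J_\lambda$ for every $\lambda\in[\lambda_0,\lambda_1]$. If $f_\lambda$ is hyperbolic this is the standard Koebe-distortion argument, since $J_\lambda$ carries no critical point and $f_\lambda$ expands uniformly in the hyperbolic metric of the complement of its postcritical set. If $f_\lambda$ is not hyperbolic I would build a Yoccoz puzzle: no free critical point escapes in this range, so the B\"ottcher coordinate of $\infty$ is defined throughout $B_\infty$, the external rays at periodic angles land, and finitely many of them --- landing at a repelling fixed point and at its preimages --- together with one equipotential cut $\EC$ into puzzle pieces which, via the symmetries, are combinatorially compatible with the puzzle of a real quadratic polynomial. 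One then shows that the nested puzzle pieces around every point of $J_\lambda$ shrink to a point: for non-renormalizable $\lambda$ by Yoccoz's modulus-growth argument adapted to $f_\lambda$, and for renormalizable $\lambda$ by straightening the renormalization to a quadratic-like map hybrid-equivalent to some $z^2+c$, observing that the symmetry forces $c$ to be real --- hence $c\in[-2,1/4]$ and the small Julia set locally connected by the known results for real quadratics --- and gluing this to the puzzle on the rest of $J_\lambda$.

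With local connectivity in hand, Whyburn's characterization (\cite{Wh}) reduces the carpet question to showing that the Fatou components $\{D_i\}$ are Jordan domains with pairwise disjoint closures and with $\diam(D_i)\to 0$, and that $J_\lambda$ has empty interior. For the sufficiency of (I) or (II): in both cases $\lambda\in\Lambda$, so $J_\lambda$ is connected (\cite{DR}); it has empty interior because $\textup{Int}(K_\lambda)=\emptyset$ in case (I) and $f_\lambda$ is hyperbolic, hence its Fatou set is dense, in case (II); each component is a Jordan domain because $B_\infty$ and $T_\lambda$ are, because the immediate-basin components in case (II) are quasidisks (here ``$\mathcal{H}$ primitive'' rules out a cusp in the attracting cycle), and because all other components are preimages of these; and the disjointness of the closures propagates from two facts. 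First, $\overline{B_\infty}\cap\overline{T_\lambda}=\emptyset$ for $\lambda\ne\lambda_0$: a common point would lie on the forward orbit of the free critical value, which governs the first contact of $f_\lambda^{n}(\partial T_\lambda)$ with $\partial B_\infty$, and in the real slice this contact occurs exactly at $\lambda_0$. Second, in case (II) the closures of the small Julia set's Fatou components are pairwise disjoint --- classical for $c$ in a primitive component of $M$ other than $H_\heartsuit$, the hypothesis that $\mathcal{H}$ is not the image of $H_\heartsuit$ being precisely what prevents a small Fatou component from growing large enough to reach $\partial B_\infty$ or $\partial T_\lambda$. Disjointness for all remaining components then follows since the inverse branches producing them are eventually univalent off the bounded critical orbit, and $\diam(D_i)\to 0$ since an infinite family of distinct components contains components of arbitrarily large depth whose inverse branches are univalent on annuli of definite modulus around $J_\lambda$ and hence contract uniformly; Whyburn's theorem then yields that $J_\lambda$ is a carpet. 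Conversely, suppose $J_\lambda$ is a carpet. If $\textup{Int}(K_\lambda)=\emptyset$ then $\lambda\ne\lambda_0$ by the contact discussion, which is (I). If $\textup{Int}(K_\lambda)\ne\emptyset$ there is a bounded Fatou component whose eventual cycle is neither wandering, nor parabolic (its components would touch at the parabolic point), nor irrationally indifferent (no Siegel or Cremer cycle can absorb the free critical orbit for $\lambda$ in this interval); hence it is attracting and $\lambda$ lies in a hyperbolic component $\mathcal{H}\subset\Lambda$ inside a maximal copy of $M$ (\cite{St}). If $\mathcal{H}$ were satellite or the image of $H_\heartsuit$, the small Julia set would have touching Fatou components, or be a Jordan curve whose bounded complementary disk has closure meeting $\partial B_\infty$ or $\partial T_\lambda$ --- in either case $J_\lambda$ is not a carpet. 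Therefore $\mathcal{H}$ is primitive and not the image of $H_\heartsuit$, and the same analysis at the root $r_{\mathcal{H}}$ (where the renormalization has a parabolic cycle but, $\mathcal{H}$ being primitive, still with Jordan components whose closures are disjoint from everything) shows that $r_{\mathcal{H}}$ likewise yields a carpet, which accounts for (II).

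I expect the main obstacle to be local connectivity in the renormalizable non-hyperbolic case, where the Yoccoz-puzzle machinery must be run for $f_\lambda$ --- which, unlike a quadratic polynomial, carries the trap door and the order-$(l+m)$ symmetry --- while correctly importing local connectivity of real quadratic Julia sets through straightening. Closely tied to this is the delicate bookkeeping in both directions of the carpet dichotomy, which must turn the combinatorial labels ``primitive versus satellite'' and ``$\ne$ image of $H_\heartsuit$'' into the sharp disjointness-of-closures statements for the small Fatou components and their interaction with $\partial B_\infty$ and $\partial T_\lambda$.
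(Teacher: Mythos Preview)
This theorem is not proved in the present paper: it is quoted from \cite{QXY} (for $l=m\ge 3$) and \cite{Xi} (for the general case), and the paper uses it as a black box in the proof of Theorem~\ref{QS-positive-restate}. So there is no ``paper's own proof'' to compare your proposal against. What the paper does contain are hints about the structure of the cited argument --- the maximal homeomorphism $\Phi:M\to\mathcal{M}$ with $\Phi([-2,1/4])=[\lambda_0,\lambda_1]$, the quasiconformal transport $\Psi_c$ of the small Julia set of $P_c$ into $K_{\Phi(c)}$, and the identification of $p_\lambda,q_\lambda$ on the positive real axis --- and your outline (symmetry reduction to a real one-parameter family, Yoccoz puzzle for local connectivity, straightening to real quadratics in the renormalizable case, Whyburn's criterion for the carpet dichotomy) is consistent with that picture.

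That said, your converse argument has a genuine gap in the parabolic case. You write that if $\textup{Int}(K_\lambda)\ne\emptyset$ then the bounded periodic Fatou cycle is ``nor parabolic (its components would touch at the parabolic point)''. This is false in general: touching at the parabolic point occurs for \emph{satellite} parabolics, not for primitive ones. In fact case~(II) explicitly includes the root $r_{\mathcal H}$ of a primitive $\mathcal H$, which is a parabolic parameter whose Julia set \emph{is} a carpet. Your forward direction acknowledges this (``the same analysis at the root $r_{\mathcal H}$\ldots''), but your converse direction rules out all parabolics and therefore never recovers $\lambda=r_{\mathcal H}$. The fix is to treat the parabolic case in parallel with the attracting one: a parabolic parameter on $[\lambda_0,\lambda_1]$ lies on the boundary of some hyperbolic component of the maximal copy $\mathcal M$, and the carpet hypothesis forces that component to be primitive and not $\Phi(H_\heartsuit)$, whence $\lambda=r_{\mathcal H}$ as in~(II). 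Separately, your exclusion of Siegel/Cremer cycles should be stated for the right reason: on the real slice the multipliers of periodic orbits are real, so irrationally indifferent cycles simply do not occur; ``cannot absorb the free critical orbit'' is not the operative mechanism here.
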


The precise definition of the maximal homeomorphic map defined from the Mandelbrot set to $\Lambda$ will be given in $\S \ref{Non-escap-case}$. Theorem \ref{Qiu-Xie-Yin} was proved only in the case $\ell=m\geq 3$ in \cite{QXY12}. Then Xie extended this result to the general case in her Ph.D. thesis and the idea of the proof is almost the same (see \cite{Xie11}). In this article, we will prove the following theorem.

\begin{thm}\label{QS-positive}
If $\lambda\in[\lambda_0,\lambda_1]$, then $J_\lambda$ is q.s.\,equivalent to a round carpet if and only if $\lambda$ satisfies \textup{(a)} or \textup{(b)} but $\lambda\neq r_{\mathcal{H}}$ in Theorem \ref{Qiu-Xie-Yin}.
\end{thm}

As a corollary, we have

\begin{cor}\label{non-hype-carpet}
If $\lambda\in(\lambda_0,\lambda_1)$ such that one of the free critical points of $f_\lambda$ is strictly pre-periodic, then $J_\lambda$ is q.s.\,equivalent to a round carpet. Moreover, there exists infinitely renormalizable rational map whose Julia set is q.s.\,equivalent to a round carpet.
\end{cor}

We will consider more general cases such that $J_\lambda$ is a Sierpi\'{n}ski carpet. In particular, the parameter $\lambda$ can vary in the punctured complex plane but not only in the positive real line. However, this requires us to set $\ell=m\geq 3$ since the local connectivity of the Julia set $J_\lambda$ was only known for $\ell= m$ if $\lambda$ is complex.

\begin{thm}\label{QS-complex}
Suppose that $\ell=m\geq 3$ and the free critical orbits of $f_\lambda$ are bounded. Then $J_\lambda$ is a Sierpi\'{n}ski carpet if
\begin{enumerate}
\item The interior $\Int (K_\lambda)=\emptyset$ and the closure of the free critical orbits does not intersect with the boundary of the immediate attracting basin of $\infty$; or
\item $\lambda\in\mathcal{H} \cup \{r_{\mathcal{H}}\}$, where $\mathcal{H}$ is a primitive hyperbolic component of a maximal copy of $M$ in $\Lambda$ such that $\mathcal{H}$ is not the image of $H_\heartsuit$ under the maximal homeomorphic map.
\end{enumerate}

Moreover, $J_\lambda$ is q.s.\,equivalent to a round carpet if $\lambda$ satisfies \textup{(a)} or \textup{(b)} but $\lambda\neq r_{\mathcal{H}}$.
\end{thm}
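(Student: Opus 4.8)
The plan is to prove the two assertions separately, following the template of the positive real case (Theorems \ref{Qiu-Xie-Yin} and \ref{QS-positive}): first that $J_\lambda$ is a Sierpi\'{n}ski carpet under \textup{(a)} or \textup{(b)}, and then, under the additional hypothesis $\lambda\neq r_{\mathcal H}$, that it is quasisymmetrically equivalent to a round one. To obtain the carpet I would verify the conditions recalled in the definition above: $J_\lambda$ is a locally connected continuum with empty interior whose complementary (Fatou) components are Jordan regions with pairwise disjoint closures and spherical diameters tending to $0$. Connectedness is immediate since the free critical orbits are bounded (\cite{DR}); $J_\lambda$ has empty interior because in case \textup{(b)} the map $f_\lambda$ is hyperbolic (or, at $r_{\mathcal H}$, geometrically finite) while in case \textup{(a)} the assumption $\textup{Int}(K_\lambda)=\emptyset$ forces $K_\lambda=J_\lambda\neq\EC$. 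Local connectivity is the first genuine input: under \textup{(b)} one combines the expansion of $f_\lambda$ away from the renormalization locus with local connectivity of the little (quadratic) Julia sets --- available because the associated quadratic parameter lies in a hyperbolic or parabolic component of $M$ --- and under \textup{(a)} one uses the puzzle/expansion machinery of \cite{QWY,QXY} together with $\textup{Int}(K_\lambda)=\emptyset$ and the disjointness of the free critical orbit closure from the boundary of the immediate basin $B_\lambda$ of $\infty$. Granting local connectivity, $\partial B_\lambda$ and $\partial T_\lambda$ (the boundary of the trap door $T_\lambda\ni 0$) are Jordan curves, by a standard combinatorial argument (cf.\ \cite{QXY}), since $f_\lambda$ restricted to $B_\lambda$, resp.\ $T_\lambda$, is proper of degree $m$, resp.\ $l$, ramified only over $\infty$, resp.\ $0$; every remaining Fatou component is an eventually univalent preimage of $B_\lambda$ or of a little Julia domain, hence again a Jordan region. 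The disjointness of the closures is where the hypotheses ``$\mathcal H$ primitive'' and ``$\mathcal H$ not the image of $H_\heartsuit$'' enter: for components eventually landing on $B_\lambda$ one argues as in the escape trichotomy (Theorem \ref{qs-classifi}\textup{(3)}), and for components inside the renormalization one uses the geometry of primitive, non-period-one quadratic renormalizations, whose cycle Fatou components have pairwise disjoint closures and whose little Julia sets are buried in $J_\lambda$. Finally $\diam(\partial D_i)\to0$ follows by a compactness argument from the fact that the Fatou components of each bounded generation are finite in number.

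For the quasisymmetric uniformization the tool is Bonk's theorem: a Sierpi\'{n}ski carpet in $\EC$ whose peripheral circles are uniform quasicircles and are uniformly relatively separated --- i.e.\ $\inf_{i\neq j}\dist(\partial D_i,\partial D_j)/\min\{\diam\partial D_i,\diam\partial D_j\}>0$ --- is quasisymmetrically equivalent to a round carpet. So, the carpet being in hand, it remains to check two uniform properties of the $\partial D_i$: \textup{(i)} each is a quasicircle with a constant independent of $i$, and \textup{(ii)} the family is uniformly relatively separated (to which I would add, as is in any case true and as some formulations of Bonk's theorem require, that $J_\lambda$ has zero area: classical when $f_\lambda$ is hyperbolic, and under \textup{(a)} a consequence of the fact that Lebesgue-almost every point of $\EC$ is attracted to $\infty$). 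For \textup{(i)}: in the hyperbolic case $f_\lambda$ is uniformly expanding on a neighborhood of $J_\lambda$ in the spherical (or a suitable orbifold) metric, so the finitely many generation-one Fatou boundaries --- each a quasicircle because it carries a B\"ottcher or linearizing coordinate --- pull back along iterated inverse branches with uniformly bounded Koebe distortion; under \textup{(a)} the role of uniform expansion is played by the contraction of inverse branches on $\EC$ minus a definite annular neighborhood of the free critical orbit closure, which the hypothesis on that closure precisely provides, so the same Koebe argument applies to the orbits that linger near $\partial B_\lambda$, the little Julia domains being handled by renormalization. This is exactly the point at which $\lambda=r_{\mathcal H}$ must be excluded: a parabolic cycle produces cusps on the Fatou boundaries and destroys the uniform quasicircle property.

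The heart of the matter, and the step I expect to be the main obstacle, is the uniform relative separation \textup{(ii)}. I would prove it by a rescaling argument. Given two Fatou components $D_i,D_j$, choose the least $n$ for which $f_\lambda^n$ carries one of them, say $D_i$, onto one of the finitely many ``standard'' Fatou components --- $B_\lambda$, $T_\lambda$, or an immediate basin component of the attracting cycle; then $f_\lambda^n(D_j)$ has bounded generation relative to that standard one, so there are only finitely many possibilities for the relevant relative configuration of the pair $\bigl(f_\lambda^n(D_i),f_\lambda^n(D_j)\bigr)$, and its relative distance is bounded below by some $\delta_0>0$. Pulling this configuration back by the branch of $f_\lambda^{-n}$ that returns it to $(D_i,D_j)$ --- a branch that is univalent on a definite neighborhood of the configuration thanks to the critical-orbit-avoidance hypothesis in case \textup{(a)}, hyperbolic expansion in case \textup{(b)}, and primitivity of the renormalization to keep distinct little Julia sets apart --- the Koebe distortion theorem shows the relative distance changes by at most a bounded factor, whence $\dist(\partial D_i,\partial D_j)/\min\{\diam\partial D_i,\diam\partial D_j\}\geq\delta>0$ uniformly in $i\neq j$. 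Assembling \textup{(i)} and \textup{(ii)} (together with the area-zero remark) and applying Bonk's theorem completes the proof; the Misiurewicz-type parameters where a free critical point is strictly preperiodic are a subhyperbolic special case of \textup{(a)} and require no extra argument.
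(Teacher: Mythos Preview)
Your plan is correct and follows essentially the same route as the paper: local connectivity comes from Theorem~\ref{Qiu-Wang-Yin}, the pairwise disjointness of Fatou-component closures is obtained from the critical-orbit hypothesis in case~\textup{(a)} and from primitivity (via Lemmas~\ref{from-well-known} and~\ref{pts-on-bdy-B}) in case~\textup{(b)}, and the quasisymmetric uniformization is deduced from Bonk's criterion (Theorem~\ref{Bonk}) by the annulus/Koebe argument of Theorem~\ref{unif-quasicircle-sep}, with the parabolic root excluded because of cusps. The paper's write-up is terser than yours --- it simply cites back to Theorem~\ref{unif-quasicircle-sep} rather than spelling out the rescaling/pullback argument you describe --- and it does not invoke a zero-area hypothesis, which the version of Bonk's theorem used here (Theorem~\ref{Bonk}) does not require.
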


This article is organized as follows: In \S\ref{escape-case}, we state some preliminary properties of the McMullen maps and prove Theorem \ref{qs-classifi}. In \S\ref{sec-qs-in-cc}, as an extension of Ha\"{\i}ssinsky and Pilgrim's result, we give more quasisymmetrically inequivalent Cantor circle Julia sets of rational maps. In \S \ref{Non-escap-case}, we give the definitions of renormalization, the classification of the hyperbolic components, the definition of maximal homeomorphic map and then prove Theorem \ref{QS-positive}, Corollary \ref{non-hype-carpet} and Theorem \ref{QS-complex}.

\medskip
\noindent\textit{Acknowledgements.} This work was supported by the NSFC (grant Nos.\,11671091, 11731003, 11401298, 11671092 and 11771387) and NSF of Jiangsu Province (grant No.\,BK20140587). We would like to thank the referees for their careful reading and helpful comments. We are also very grateful to Xiaoguang Wang for pointing out an error in the proof of Theorem \ref{unif-quasicircle-sep} in the earlier version.

\section{Preliminaries and the escaping case}\label{escape-case}

The map $f_\lambda$ has a critical point at the origin with multiplicity $\ell-1$, a critical point at $\infty$ with multiplicity $m-1$, and a simple critical point at
\begin{equation*}
\omega_j=e^{2\pi\ii\frac{ j}{\ell+m}}({\lambda \ell}/{m})^{\frac{1}{\ell+m}}, \text{~where~} 0\leq j< \ell+m.
\end{equation*}
These critical points $\omega_0,\cdots,\omega_{\ell+m-1}$ are called the \textit{free} critical points of $f_\lambda$. By a direct calculation, it follows that $|f_\lambda^{\circ n}(\omega_j)|=|f_\lambda^{\circ n}(\omega_k)|$ for $0\leq j,k<\ell+m$ and $n\geq 0$. Hence the fate of the Julia set of $f_\lambda$ is determined by any one of the $\ell+m$ free critical orbits.

\subsection{Escape trichotomy theorem}

Note that $f_\lambda$ has a super-attracting fixed point at $\infty$ whose preimages are $0$ and itself.
Let $B_\lambda$ and $T_\lambda$ be the Fatou components of $f_\lambda$ containing $\infty$ and $0$, respectively. Then $B_\lambda\cap T_\lambda=\emptyset$ or $B_\lambda=T_\lambda$. Devaney, Look and Uminsky gave a topological classification on the Julia sets as in the following theorem.

\begin{thm}[\cite{DLU05}]\label{E-T-T}
Suppose that the free critical points of $f_\lambda$ are attracted by $\infty$. Then one and only one of the following three cases happens:
\begin{enumerate}
\item $f_\lambda(\omega_j)\in B_\lambda$ for some $j$, then $J_\lambda$ is a Cantor set;
\item $f_\lambda(\omega_j)\in T_\lambda\neq B_\lambda$ for some $j$, then $J_\lambda$ is a Cantor set of circles;
\item $f_\lambda^{\circ k}(\omega_j)\in T_\lambda\neq B_\lambda$ for some $j$ and $k\geq 2$, then $J_\lambda$ is a Sierpi\'{n}ski carpet.
\end{enumerate}
\end{thm}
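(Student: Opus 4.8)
\medskip
\noindent\textbf{Proof proposal.}
The plan is to reproduce the argument of Devaney--Look--Uminsky, which rests on two elementary facts. Near $\infty$ the map $f_\lambda$ is conjugate (by a B\"{o}ttcher coordinate) to $w\mapsto w^m$, while $0$ is a pole of order $l$, so $f_\lambda(0)=\infty$ and $T_\lambda$ is a component of $f_\lambda^{-1}(B_\lambda)$ with local degree $l$ at $0$; moreover, writing $\nu=e^{2\pi i/(l+m)}$, one has $f_\lambda(\nu z)=\nu^m f_\lambda(z)$ and hence $f_\lambda^{\circ n}(\nu z)=\nu^{m^n}f_\lambda^{\circ n}(z)$. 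This makes $|f_\lambda^{\circ n}(\omega_j)|$ independent of $j$ and shows that $A(\infty):=\bigcup_{n\ge 0}f_\lambda^{-n}(B_\lambda)$, and with it $B_\lambda$ and $T_\lambda$, are invariant under $z\mapsto\nu z$; thus a single ``escape time'' $N\in\{0,1,2,\dots\}$, the first $n$ for which the common modulus of the critical orbit puts $\omega_j$ into $B_\lambda$, governs all $l+m$ free critical points at once. Since by hypothesis these orbits escape, the whole postcritical set lies in $A(\infty)$, so $f_\lambda$ is hyperbolic; in particular $J_\lambda$ is locally connected and has empty interior.

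\medskip
First I would dispose of the ``one and only one'' clause. A Riemann--Hurwitz count shows that if no $\omega_j$ lies in $B_\lambda\cup T_\lambda$, then $f_\lambda\colon B_\lambda\to B_\lambda$ and $f_\lambda\colon T_\lambda\to B_\lambda$ are proper of degrees $m$ and $l$, both are simply connected, and $f_\lambda^{-1}(B_\lambda)=B_\lambda\sqcup T_\lambda$ exactly. Hence case $(1)$ holds iff some $\omega_j\in B_\lambda\cup T_\lambda$, iff $B_\lambda=T_\lambda$ is completely invariant, iff $N\le 1$; and if $(1)$ fails then $N\ge 2$ with $f_\lambda^{\circ(N-1)}(\omega_j)\in f_\lambda^{-1}(B_\lambda)\setminus B_\lambda=T_\lambda$, so $N=2$ is exactly case $(2)$ and $N\ge 3$ is exactly case $(3)$. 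For case $(1)$ itself, $B_\lambda=T_\lambda=A(\infty)$ is completely invariant and contains every critical point; choosing a Jordan neighbourhood $V$ of $\infty$ inside $A(\infty)$ with $f_\lambda(\overline V)\subset V$ and with $f_\lambda^{-1}(V)$ still containing every critical point (a round disk $\{|z|>R\}$ when $|\lambda|$ is large, an equipotential domain in general), the set $K:=\EC\setminus V$ is a closed topological disk containing no critical value of $f_\lambda$, so $f_\lambda^{-1}(K)$ is a disjoint union of $l+m$ closed topological disks, each compactly contained in $K$ and mapped homeomorphically onto $K$; iterating, with the inverse branches uniformly contracting the hyperbolic metric of $K$ (hyperbolicity), gives $J_\lambda=\bigcap_n f_\lambda^{-n}(K)$, a Cantor set.

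\medskip
For case $(2)$ the previous step gives $f_\lambda^{-1}(B_\lambda)=B_\lambda\sqcup T_\lambda$ with both Jordan domains, and one checks — using the symmetry and the fact that $f_\lambda^{-1}(T_\lambda)$ must absorb all $l+m$ free critical points — that $f_\lambda^{-1}(T_\lambda)$ is a single essential annulus separating $B_\lambda$ from $T_\lambda$. Consequently the annulus $\Omega:=\EC\setminus(\overline{B_\lambda}\cup\overline{T_\lambda})$ satisfies $f_\lambda^{-1}(\Omega)=X\sqcup Y$, two disjoint essential sub-annuli abutting $\partial B_\lambda$ and $\partial T_\lambda$, with $f_\lambda\colon X\to\Omega$ and $f_\lambda\colon Y\to\Omega$ unramified coverings of degrees $m$ and $l$; the degrees and the fact that there are exactly two such sub-annuli are forced by the degrees of $f_\lambda$ on $\partial B_\lambda$ and $\partial T_\lambda$ together with the observation that a sub-band of $\Omega$ touching $\partial B_\lambda$ or $\partial T_\lambda$ would have to map into $B_\lambda$ or $T_\lambda$. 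Iterating produces $2^n$ pairwise disjoint essential sub-annuli of $\Omega$ at stage $n$, coded by $\{0,1\}^n$, whose moduli decay geometrically; then $J_\lambda=\bigcap_n f_\lambda^{-n}(\Omega)$, and hyperbolicity forces the transverse diameter along each infinite branch to tend to $0$, so every branch is a single Jordan curve and $J_\lambda$ is a Cantor set of circles.

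\medskip
For case $(3)$ the goal is the characterization of carpets recalled above (Whyburn, \cite{Wh}): $J_\lambda$ should be connected, locally connected, with empty interior, and equal to $\EC$ minus countably many Jordan domains whose closures are pairwise disjoint and whose spherical diameters tend to $0$. Empty interior is automatic, local connectivity comes from hyperbolicity, and the diameters of the iterated‑preimage Fatou components decay exponentially by hyperbolic expansion and Koebe distortion. The two substantive points are that every Fatou component is a Jordan domain and that distinct Fatou components have disjoint closures; granted these, $J_\lambda$ is automatically connected (a multiply connected Fatou component would disconnect it), and Whyburn's theorem applies. This is where the special structure of the family must enter: the $(l+m)$-fold rotational symmetry, together with the fact that in case $(3)$ all free critical points lie on a single round circle sitting at depth $N\ge 3$ in the preimage tree of $B_\lambda$, controls how the critical points are distributed among preimage components (keeping every component simply connected) and must be used to prevent the boundaries of distinct components from touching. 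I expect precisely the latter — ruling out a point of $J_\lambda$ lying on two peripheral circles — to be the main obstacle, since it is not a formal consequence of hyperbolicity or Riemann--Hurwitz; the natural approach is to iterate a hypothetical common boundary point forward until one of the two components becomes $B_\lambda$ and then derive a contradiction from the dynamics of $f_\lambda$ on the Jordan curve $\partial B_\lambda$ (on which $f_\lambda$ is a degree-$m$ covering) together with the trap-door geometry of $T_\lambda$.
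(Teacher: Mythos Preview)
The paper does not give its own proof of this theorem: it is stated with the attribution \cite{DLU} and no argument follows, so there is nothing in the present paper to compare your proposal against. Your outline is broadly faithful to the Devaney--Look--Uminsky strategy (escape-time trichotomy $N\le 1$, $N=2$, $N\ge 3$; Cantor set via an IFS of $l+m$ inverse branches; Cantor circles via two annular inverse branches on $\Omega=\EC\setminus(\overline{B_\lambda}\cup\overline{T_\lambda})$; carpet via Whyburn's characterization).

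Two places in your sketch would need tightening before it is a proof. In case $(2)$ you assert that $f_\lambda^{-1}(T_\lambda)$ is a \emph{single} essential annulus absorbing all free critical points; this is correct, but the justification you give (``using the symmetry and the fact that $f_\lambda^{-1}(T_\lambda)$ must absorb all $l+m$ free critical points'') is circular --- you have not yet shown the critical points lie in $f_\lambda^{-1}(T_\lambda)$, only that the critical \emph{values} lie in $T_\lambda$. The actual argument uses Riemann--Hurwitz on $f_\lambda^{-1}(T_\lambda)$: since $T_\lambda$ is a disk and $f_\lambda$ has degree $l+m$, the Euler characteristic of $f_\lambda^{-1}(T_\lambda)$ is $(l+m)-(l+m)=0$, so it is a union of annuli; the rotational symmetry then forces a single component. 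In case $(3)$ you are candid that disjointness of the closures of distinct Fatou components is the crux and you only sketch an approach. The DLU argument here is more delicate than your last sentence suggests: one cannot simply iterate a hypothetical touching point forward to $\partial B_\lambda$ and invoke the degree-$m$ covering dynamics, because the two components need not have the same depth in the preimage tree. The original proof instead shows first that $\overline{B_\lambda}\cap\overline{T_\lambda}=\emptyset$ (a touching point would be critical, contradicting $N\ge 3$), and then pulls this back; the step $\overline{U}\cap\overline{V}=\emptyset$ for components of the same depth again requires that a touching point be critical. Your proposal identifies the right obstacle but does not yet supply this mechanism.
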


In fact, the parameter space $\mathbb{C}^*$ of $f_\lambda$ can be divided into four parts $\Lambda\sqcup\Lambda_\infty \sqcup\Lambda_0 \sqcup\Lambda_S$ (see \cite[\S 7]{Ste06} and `$\sqcup$' denotes the disjoint union), where $\Lambda$ is the \textit{non-escaping locus} defined in (\ref{Lambda}); $\Lambda_\infty=\{\lambda\in\mathbb{C}^*:J_\lambda \textup{ is a Cantor set}\}$ is a unbounded domain called the \textit{Cantor locus}; $\Lambda_0:=\{\lambda\in\mathbb{C}^*:J_\lambda \textup{ is a Cantor set of circles}\}$ is a punctured neighborhood of 0 called the \textit{McMullen domain} since McMullen is the first one who found the rational maps whose Julia sets are Cantor circles (see \cite[\S 7]{McM88}); and the components of $\Lambda_S=\{\lambda\in\mathbb{C}^*\setminus\Lambda:J_\lambda \textup{ is a Sierpi\'{n}ski carpet}\}$ are called \textit{Sierpi\'{n}ski holes}. Moreover, every hyperbolic component in $\Lambda$ is contained in some homeomorphic image of the Mandelbrot set (see \cite[Theorem 10]{Ste06} and Figure \ref{Fig_McMullen_para}).

\subsection{Q.S. uniformization of Cantor sets}

A proposition of David and Semmes \cite[Proposition 15.11]{DS97} asserts that if the Julia set of a hyperbolic rational map is homeomorphic to the middle third Cantor set $C_{3,3}$, then this Julia set is q.s. equivalent to $C_{3,3}$. By Theorem \ref{E-T-T}(a), this means that Theorem \ref{qs-classifi}(a) holds.

\subsection{Q.S. uniformization of Cantor circles}

We will prove Theorem \ref{qs-classifi}(b) by using quasiconformal surgery.
For any two positive integers $\ell,m\geq 2$ such that $1/\ell+1/m<1$, we define an \emph{iterated function system} (\textit{IFS} in short) on an annulus such that the \emph{attractor} of this IFS is conformally equivalent to the standard Cantor circles $A_{\ell,m}$. Then we extend the inverse of this IFS to a quasiregular map which is defined from $\EC $ to itself. Finally, after constructing an invariant Beltrami coefficient on $\EC $ and straightening the corresponding complex structure, we show that there exists a quasiconformal map conjugating this quasiregular map to the McMullen map $f_\lambda$ for some small $\lambda\neq 0$.

Fix $0<r_0<1$, define $I=[r_0,1]$. Let $I_0=[r_0,r_0^{1-1/\ell}]$ and $I_1=[r_0^{1/m},1]$ be two subintervals of $I$. Define three annuli $A:=\{re^{\ii t}:r\in I~\text{and}~0\leq t<2\pi\}$ and $A_j:=\{re^{\ii t}:r\in I_j~\text{and}~0\leq t<2\pi\}$ for $j=0,1$. Let $F:A_0\sqcup A_1\rightarrow A$ be the map such that
\begin{equation*}
F|_{A_0}(z)=r_0^\ell/z^l~~~\text{and}~~~
F|_{A_1}(z)=z^m.
\end{equation*}
Note that $F|_{A_0}:A_0\rightarrow A$ and $F|_{A_1}:A_1\rightarrow A$ are covering maps with degree $\ell$ and $m$ respectively. It is easy to see that the attractor of the IFS $\{(F|_{A_0})^{-1},(F|_{A_1})^{-1}\}$ is conformally equivalent to the standard Cantor circles $A_{\ell,m}$.

For each $r>0$, we use $\mathbb{D}_r:=\{z:|z|<r\}$ to denote the round disk centered at the origin with radius $r$. We extend the domain of definition of $F$ to the disk $\mathbb{D}_{r_0}$ by letting $F(z)=r_0^\ell/z^\ell$ which maps $\mathbb{D}_{r_0}$ onto the exterior of the closed unit disk $\EC \setminus\overline{\mathbb{D}}$ and define $F(z)=z^m$ which maps $\EC \setminus\overline{\mathbb{D}}$ onto itself. Obviously, the map $F$ is continuous in $\{z\in\EC:|z|\leq r_1\text{~or~}|z|\geq r_2\}$, where $r_1=r_0^{1-1/\ell}$ and $r_2=r_0^{1/m}$. Next, we want to extend $F$ such that $F$ maps the annulus $\mathbb{A}_{r_1,r_2}:=\{z:r_1<|z|<r_2\}$ onto the disk $\mathbb{D}_{r_0}$.

Recall that a branched covering $f:\EC \rightarrow\EC $ is called \textit{quasiregular} if it can be written as $\psi\circ R\circ\varphi$, where $R$ is a rational map and $\psi$, $\varphi$ are both quasiconformal homeomorphisms. For $r>0$, we use $\mathbb{T}_r=\{z:|z|=r\}$ to denote the boundary of $\D_r$. Let $A$ and $B$ be subsets of $\EC$. We use the notation `$A\Subset B$' if the closure $\overline{A}$ is contained in the interior of $B$.

\begin{lema}\label{an-to-disk}
Let $f_1(z)=r_0 r_1^\ell/z^\ell$ and $f_2(z)=r_0 z^m/r_2^m$ be two covering maps defined on the two circles $\mathbb{T}_{r_1}$ and $\mathbb{T}_{r_2}$ respectively, such that $f_1(\mathbb{T}_{r_1})=\mathbb{T}_{r_0}$ and $f_2(\mathbb{T}_{r_2})=\mathbb{T}_{r_0}$. Then there exists a continuous extension $F:\overline{\mathbb{A}}_{r_1,r_2}\rightarrow \overline{\mathbb{D}}_{r_0}$ such that
\begin{enumerate}
\item $F|_{\mathbb{T}_{r_i}}=f_i$, where $i=1,2$;
\item $F:\mathbb{A}_{r_1,r_2}\rightarrow \mathbb{D}_{r_0}$ is quasiregular; and
\item $F(e^{2\pi\ii/(\ell+m)}z)=e^{2m\pi\ii/(\ell+m)}F(z)$ for $z\in \overline{\mathbb{A}}_{r_1,r_2}$.
\end{enumerate}
\end{lema}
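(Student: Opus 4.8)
The plan is to build the extension $F$ on $\overline{\mathbb{A}}_{r_1,r_2}$ by interpolating between the two boundary maps $f_1$ and $f_2$ in a symmetry-respecting way, and then checking quasiregularity directly from the formula. First I would pass to logarithmic coordinates: write $z=e^{\zeta}$ with $\zeta=s+it$, so that the annulus $\mathbb{A}_{r_1,r_2}$ becomes the horizontal strip $\{\log r_1<s<\log r_2\}$ modulo $2\pi i$, and $\mathbb{D}_{r_0}$ becomes the half-strip $\{s<\log r_0\}$ modulo $2\pi i$. In these coordinates $f_1$ is the affine-linear map $\zeta\mapsto \log(r_0 r_1^l)-l\zeta$ (degree $l$, orientation of the angular variable reversed) and $f_2$ is $\zeta\mapsto \log(r_0/r_2^m)+m\zeta$ (degree $m$). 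I would then define $F$ on the strip by a convex combination in the radial parameter: writing $u=\frac{s-\log r_1}{\log r_2-\log r_1}\in[0,1]$, set the angular part of $F$ to be $\big((1-u)(-l)+u\,m\big)t$ plus a suitable $u$-dependent shift, and choose the radial part of $F$ so that it is monotone in $s$ for each fixed $t$ and matches $\log r_0$ along $s=\log r_1$ and $s=\log r_2$ (e.g. a quadratic or piecewise-linear profile in $u$ that stays $\le \log r_0$). Pulling this back to the $z$-plane gives a continuous $F:\overline{\mathbb{A}}_{r_1,r_2}\to\overline{\mathbb{D}}_{r_0}$ satisfying (1) by construction.

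For (2), quasiregularity: the map I have written down in the $\zeta$-strip is smooth (or piecewise smooth) with a Jacobian that I can compute explicitly; I must verify it is orientation-consistent and non-degenerate, i.e. the Jacobian determinant is bounded away from $0$ and $\infty$ on the compact strip, so that the complex dilatation $\mu_F=\bar\partial F/\partial F$ satisfies $\|\mu_F\|_\infty<1$. The degree bookkeeping is forced: along $s=\log r_1$ the angular map has local degree $-l$ and along $s=\log r_2$ it has local degree $m$, so the interpolated angular coefficient $(1-u)(-l)+u\,m$ passes through $0$ at $u_\ast=l/(l+m)$; at that level set $\partial_t(\text{angular part})=0$, which would make the map fail to be a local diffeomorphism there. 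This is the point to be careful about, and the fix is standard: instead of interpolating the angular \emph{slope}, interpolate between the two boundary maps through an intermediate \emph{folding} model — concretely, first extend $f_1$ to a quasiregular map from a sub-annulus onto $\mathbb{D}_{r_0}$ as a genuine $l$-fold branched cover of the disk branched at the center, then compose with a quasiconformal self-map of the disk, and similarly for $f_2$ — and glue the two halves along a middle circle where both have been arranged to agree up to rotation. In either realization the key estimate is the same: on a fixed compact annulus a smooth surjection with nowhere-vanishing, bounded Jacobian has bounded dilatation, hence is quasiregular; $F$ is then $\psi\circ R\circ\varphi$ with $R(z)=z^d$ for an appropriate $d$ by the measurable Riemann mapping theorem applied to $\mu_F$.

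Property (3) is the rotational symmetry, and it is easiest to build it in from the start rather than check it afterwards. The relation $F(\rho z)=\rho^m F(z)$ with $\rho=e^{2\pi i/(l+m)}$ holds for $f_2(z)=r_0 z^m/r_2^m$ verbatim, and for $f_1(z)=r_0 r_1^l/z^l$ it holds because $\rho^{-l}=\rho^{m}$ (since $\rho^{l+m}=1$). So on both boundary circles $F$ intertwines the rotation by $2\pi/(l+m)$ on the source with the rotation by $2m\pi/(l+m)$ on the target. To propagate this to the interior I would only use interpolation data (the radial profile, and any quasiconformal factor) that is independent of the angular variable $t$; then the angular part of $F$ is automatically of the form $c(u)\,t+h(u)$, and the identity $F(\rho z)=\rho^m F(z)$ reduces to $c(u)\cdot\frac{2\pi}{l+m}\equiv\frac{2m\pi}{l+m}\pmod{2\pi}$, i.e. $c(u)\equiv m\pmod{l+m}$, which is exactly what the boundary values $c(0)=-l\equiv m$ and $c(1)=m$ force as long as $c$ is chosen to move through the allowed residue class — again the reason the naive linear interpolation of $c$ is replaced by a folding model. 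In the $\zeta$-strip this symmetry is just invariance under $\zeta\mapsto\zeta+\tfrac{2\pi i}{l+m}$ composed with a vertical translation by $\tfrac{2m\pi i}{l+m}$ on the target, and since my construction is translation-equivariant in $t$ it is immediate.

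The main obstacle is therefore (2): arranging the extension so that the unavoidable change in angular winding number from $-l$ on the inner circle to $+m$ on the outer circle is realized by a quasiregular (rather than merely continuous) map — equivalently, routing the ``fold'' through the center of $\mathbb{D}_{r_0}$ as an honest branch point of a $z^d$-type model and absorbing all remaining distortion into quasiconformal factors — while keeping everything $\mathbb{Z}/(l+m)$-equivariant. Once the explicit quasiregular model is in hand, computing $\|\mu_F\|_\infty<1$ on the compact annulus is a routine estimate, and (1) and (3) hold by construction.
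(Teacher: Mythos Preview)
You correctly isolate the heart of the matter: since the boundary maps have angular degrees $-l$ and $+m$, any extension must be a genuine branched covering of degree $l+m$, and naive radial interpolation of the angular slope degenerates where $(1-u)(-l)+um=0$. Your handling of the symmetry~(3) is also sound in principle, since both $f_1$ and $f_2$ already intertwine rotation by $2\pi/(l+m)$ on the source with rotation by $2m\pi/(l+m)$ on the target.

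The gap is in your fix for (2). You propose to extend $f_1$ to an $l$-fold branched cover from an inner sub-annulus onto $\overline{\mathbb{D}}_{r_0}$, do the same for $f_2$ with degree $m$ on an outer sub-annulus, and glue along a middle circle ``arranged to agree up to rotation.'' But for a proper branched cover from an annulus onto a disk, \emph{both} boundary circles map onto $\mathbb{T}_{r_0}$, and their covering degrees sum to the total degree; if $\mathbb{T}_{r_1}$ already contributes degree $l$, an $l$-fold extension would force the middle circle to have degree $0$, which is impossible. More generally, whatever the middle circle does under the inner extension must match exactly what it does under the outer one, and two independently built covers of different degrees cannot be made to agree on an entire circle by a rotation. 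The phrase ``arranged to agree'' hides precisely the missing content: a coherent placement of the $l+m$ simple critical points and a description of how the $l+m$ sheets reconnect around them. Your appeal to ``$F=\psi\circ R\circ\varphi$ with $R(z)=z^d$ via the measurable Riemann mapping theorem'' is also premature; MRMT straightens a Beltrami coefficient, but only \emph{after} you have an honest finite-degree branched covering in hand, which is exactly what has not been constructed.

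The paper takes an entirely different, combinatorial route. For $l=2$, $m=3$ (the general case being analogous) it inscribes a regular pentagram in the annulus, declares the five outer vertices to be the branch points, and fills each concave notch of the star with a small regular pentagon. The annulus is then cut into $10$ quadrilaterals between the inner circle and the pentagram and $15$ quadrilaterals between the pentagons and the outer circle; the target disk is cut into a central pentagon and $5$ surrounding quadrilaterals. Each source piece is sent to a target piece by an explicit affine or quasiconformal map matching $f_1$, $f_2$ on the boundary arcs, so that the $10$ inner pieces wrap twice and the $15$ outer pieces wrap three times, while each small pentagon goes conformally onto the central one. The $(l+m)$-fold symmetry is built into the picture, and quasiregularity follows because the map is piecewise quasiconformal on finitely many pieces. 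This explicit cell decomposition is exactly what your interpolation scheme lacks.
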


\begin{proof}
In order to state the process of construction more clearly, we only give the proof of the case for $\ell=2$ and $m=3$. The other cases can be proved completely similarly\footnote{The proof needs to be modified slightly if $\ell$ and $m$ are not coprime to each other.}. We first construct a branched covering from $\overline{\mathbb{A}}_{\varepsilon,1}$ to $\D$ with degree $\ell+m$, where $\varepsilon>0$ is small enough (i.e. we first assume that $r_1=\varepsilon$, $r_2=1$ and $r_0=1$ in the conditions of Lemma \ref{an-to-disk}). Then the desired extension in this lemma can be obtained by doing a radial setting.

Let $A$ be the domain bounded by a regular pentagram in $\mathbb{D}$ which is centered at the origin such that $\mathbb{D}_\varepsilon\Subset A$, where $x_0$, $x_1$, $\cdots$, $x_9$ are 10 vertices of this pentagram in anticlockwise order and $x_0$ is one of the outer five vertices with argument 0 (see the picture in Figure \ref{Fig_annu-to-disk} on the left). For $1\leq j\leq 5$, let $B_j$ be the regular pentagon containing two edges $[x_{2j-2},x_{2j-1}]$ and $[x_{2j-1},x_{2j}]$, where $x_{10}=x_0$. Since $\varepsilon$ is small enough, we can choose $A$ with appropriate size such that $A\cup\bigcup_{j=1}^5 B_j\Subset \mathbb{D}$. For $1\leq j\leq 5$, let $y_{2j-2},y_{2j-1}$ be other two vertices of $B_j$ in anticlockwise order which have not been marked. For $1\leq j\leq 5$ and $2j-2\leq k\leq 2j-1$, let $\gamma_{k+j}=[y_k,e^{2\pi (k+j)\ii/15}]$ be the segment connecting $y_k$ and $e^{2\pi (k+j)\ii/15}$. For $0\leq n< 5$, let $\gamma_{3n}=[x_{2n},e^{2\pi n\ii/5}]$. This means that we have 15 segments $\gamma_s$ which divide $E:=\mathbb{D}\setminus (A\cup\bigcup_{j=1}^5 B_j)$ into 15 topological quadrilaterals $E_s$ in anticlockwise order such that $\gamma_s$ and $\gamma_{s+1}$ are two edges of $E_s$, where $0\leq s<15$ and $\gamma_{15}:=\gamma_0$. Similarly, let $\eta_t=[x_t,\varepsilon e^{2\pi t\ii/10}]$ be the segment connecting $x_t$ and $e^{2\pi t\ii/10}$, where $0\leq t<10$. Then this 10 segments $\eta_t$ divide $D:=A \setminus\mathbb{D}_\varepsilon$ into 10 topological quadrilaterals $D_t$ in anticlockwise order such that $\eta_t$ and $\eta_{t+1}$ are two boundaries of $D_t$, where $0\leq t<10$ and $\eta_{10}:=\eta_0$.

Let $B$ be a regular pentagon in $\mathbb{D}$ which is centered at the origin, where $z_0$, $z_1$, $\cdots$, $z_4$ are 5 vertices of $B$ in anticlockwise order and the argument of $z_0$ is 0. Let $\zeta_n=[z_n,e^{2\pi n\ii/5}]$ be the segment connecting $z_n$ and $e^{2\pi n\ii/5}$, where $0\leq n<5$. Then this 5 segments $\zeta_n$ divide $\mathbb{D}\setminus B$ into 5 topological quadrilaterals $G_n$ in anticlockwise order such that $\zeta_n$ and $\zeta_{n+1}$ are two edges of $G_n$, where $0\leq n<5$ and $\zeta_5=\zeta_0$ (see the picture in Figure \ref{Fig_annu-to-disk} on the right).

\begin{figure}[!htpb]
  \setlength{\unitlength}{1mm}
  \centering
  \includegraphics[width=120mm]{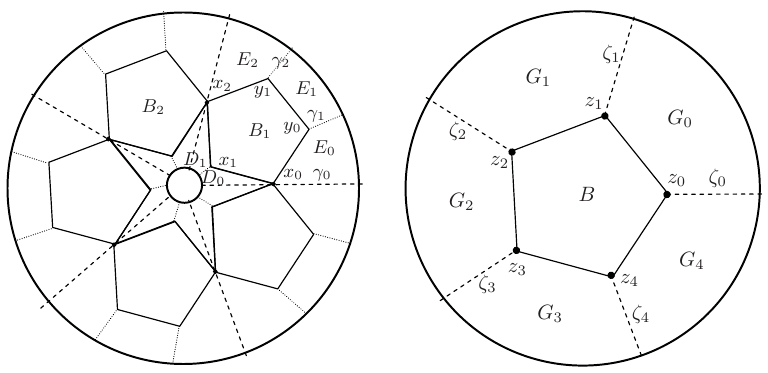}
  \caption{Sketch illustrating of the construction of a branch covering from a round annulus to a round disk with degree 5, where $\ell=2$ and $m=3$.}
  \label{Fig_annu-to-disk}
\end{figure}

We first focus on the extension $F:E_0\rightarrow G_0$. For $x,y\in\T_r$, we use $L(x,y)\subset\T_r$ to denote the component of $\T_r\setminus\{x,y\}$ with smaller arc length. The boundary of $E_0$ consists of the segments $\gamma_0$, $[x_0,y_0]$, $\gamma_1$ and the arc $L(1,e^{2\pi\ii/15})$ while the boundary of $G_0$ consists of the segments $\zeta_0$, $[z_0,z_1]$, $\zeta_1$ and the arc $L(1,e^{2\pi\ii/5})$. It is known that the restriction of $F$ on the arc $L(1,e^{2\pi\ii/15})$ is defined by $F(z)=z^3$. We define $F(\gamma_0, [x_0,y_0], \gamma_1)=(\zeta_0, [z_0,z_1], \zeta_1)$ by three affine maps which keep the corresponding vertices. Then $F$ can be quasiconformally extended to the interior of $E_0$ such that $F(E_0)= G_0$. Similarly, define the quasiconformal extension $F(E_1)= G_1$ and $F(E_2)=G_2$ such that $F(\gamma_1, [y_0,y_1], \gamma_2)=(\zeta_1, [z_1,z_2], \zeta_2)$ and $F(\gamma_2, [y_1,x_2], \gamma_3)=(\zeta_2, [z_2,z_3], \zeta_3)$. By the symmetry of the domain, for $z\in E_s$, where $3\leq s<15$, define $F(z)=e^{6\pi\ii/5}F(e^{-2\pi\ii/5}z)$ inductively.

We can use the similar method to extend $F$ on $A\setminus\mathbb{D}_\varepsilon$ such that $F(D_0)=G_4$ and $F(D_1)=G_3$. For $2\leq t<10$ and $z\in D_t$, define $F(z)=e^{-4\pi\ii/5}F(e^{-2\pi\ii/5}z)$ inductively. For $1\leq j\leq 5$, since $B_j$ and $B$ are regular pentagons and the restriction of $F$ on the boundaries of $B_j$ is affine, there exists a natural conformal extension from $B_i$ onto $B$. Up to now, we obtain a branched covering $F:\mathbb{D}\setminus\mathbb{D}_\varepsilon\rightarrow\mathbb{D}$, where $x_0,x_2,x_4,x_6$ and $x_8$ are branched points. By the construction, $F$ is quasiregular since $F$ is locally quasiconformal almost everywhere. Moreover, $F(e^{2\pi\ii/5}z)=e^{6\pi\ii/5}F(z)$.

For $r_1=r_0^{1-1/\ell}$, $r_2=r_0^{1/m}$ and every $z=re^{\ii \theta}\in\overline{\mathbb{A}}_{r_1,r_2}$, where $r\in[r_1, r_2]$ and $\theta\in[0,2\pi)$, define
\begin{equation*}
\widetilde{F}(z)=\frac{1}{r_0}F([\varepsilon+\frac{1-\varepsilon}{r_2-r_1}(r-r_1)]e^{\ii \theta}).
\end{equation*}
Then $\widetilde{F}:\overline{\mathbb{A}}_{r_1,r_2}\rightarrow \overline{\mathbb{D}}_{r_0}$ is the desired extension stated in the lemma.
\end{proof}

Recall that $F(z)=r_0^\ell/z^\ell$ for $z\in\mathbb{D}_{r_0}$ and $F(z)=z^m$ for $z\in\EC \setminus\overline{\mathbb{D}}$. By the construction in Lemma \ref{an-to-disk}, we obtain the following corollary immediately.

\begin{cor}\label{qr-symm}
The extended map $F:\EC \rightarrow\EC $ is quasiregular with degree $\ell+m$ and satisfies $F(e^{2\pi\ii/(\ell+m)}z)=e^{2m\pi\ii/(\ell+m)}F(z)$ for all $z\in\EC $.
\end{cor}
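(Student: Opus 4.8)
The plan is entirely one of assembly, since by the time we reach the corollary the map $F\colon\overline{\mathbb{C}}\to\overline{\mathbb{C}}$ has already been written down piecewise: it equals $z\mapsto r_0^{l}/z^{l}$ on $\overline{\mathbb{D}}_{r_0}$, it equals $z\mapsto z^{m}$ on $\overline{\mathbb{C}}\setminus\mathbb{D}$, it equals the two IFS maps $F|_{A_0}(re^{it})=g_0^{-1}(r)\,e^{-ilt}$ and $F|_{A_1}(re^{it})=g_1^{-1}(r)\,e^{imt}$ on the closed annuli $A_0=\overline{\mathbb{A}}_{r_0,r_1}$ and $A_1=\overline{\mathbb{A}}_{r_2,1}$, and it equals the extension of Lemma \ref{an-to-disk} on $\overline{\mathbb{A}}_{r_1,r_2}$. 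First I would observe that these five closed sets cover $\overline{\mathbb{C}}$ and that the definitions agree on each of the four circles $\mathbb{T}_{r_0},\mathbb{T}_{r_1},\mathbb{T}_{r_2},\mathbb{T}_1$ where two pieces meet: using $g_0^{-1}(r_0)=1$, $g_0^{-1}(r_1)=r_0$, $g_1^{-1}(r_2)=r_0$, $g_1^{-1}(1)=1$ together with the formulas for $f_1,f_2$ from Lemma \ref{an-to-disk}, one checks that on $\mathbb{T}_{r_0}$ both relevant pieces restrict to $r_0e^{it}\mapsto e^{-ilt}$, on $\mathbb{T}_{r_1}$ both restrict to $r_1e^{it}\mapsto r_0e^{-ilt}$, on $\mathbb{T}_{r_2}$ both restrict to $r_2e^{it}\mapsto r_0e^{imt}$, and on $\mathbb{T}_1$ both restrict to $e^{it}\mapsto e^{imt}$. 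Hence $F$ is well defined and continuous on $\overline{\mathbb{C}}$.

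Next I would establish quasiregularity piece by piece. The maps $z\mapsto r_0^{l}/z^{l}$ and $z\mapsto z^{m}$ are rational, hence $1$-quasiregular; on $\overline{\mathbb{A}}_{r_1,r_2}$ the map is quasiregular by Lemma \ref{an-to-disk}(2); and on each $A_j$ the map is, in polar coordinates, the product of an affine (hence bi-Lipschitz) reparametrisation of the radial variable with the angular map $t\mapsto -lt$, respectively $t\mapsto mt$ --- equivalently the composition of $z\mapsto z^{-l}$ (respectively $z\mapsto z^{m}$) with a radial homeomorphism of an annulus staying away from $0$ and $\infty$ --- so it is a smooth, orientation-preserving branched covering (the Jacobian in $(r,t)$-coordinates equals $-l\,(g_0^{-1})'=l^{2}>0$ on $A_0$ and $m\,(g_1^{-1})'=m^{2}>0$ on $A_1$) whose complex dilatation is continuous on a compact set and hence bounded. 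Since $F$ is continuous on $\overline{\mathbb{C}}$ and quasiregular off the finite union of analytic circles $\mathbb{T}_{r_0}\cup\mathbb{T}_{r_1}\cup\mathbb{T}_{r_2}\cup\mathbb{T}_1$, the standard removability (gluing) lemma for quasiregular maps --- the one routinely used in quasiconformal surgery --- shows that $F$ is quasiregular on all of $\overline{\mathbb{C}}$. The degree is then read off by counting preimages over a regular value: for a finite $w$ with $|w|>1$ the preimages are the $m$ solutions of $z^{m}=w$ in $\overline{\mathbb{C}}\setminus\overline{\mathbb{D}}$ together with the $l$ solutions of $r_0^{l}/z^{l}=w$ in $\mathbb{D}_{r_0}$, so $\deg F=l+m$ (equivalently, a point of $\mathbb{A}_{r_0,1}$ has $l$ preimages in $A_0$ and $m$ in $A_1$).

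For the rotational symmetry put $\mu=e^{2\pi i/(l+m)}$, so that $\mu^{l+m}=1$, i.e.\ $\mu^{-l}=\mu^{m}=e^{2m\pi i/(l+m)}$, and then verify $F(\mu z)=\mu^{m}F(z)$ on each piece: on $\overline{\mathbb{C}}\setminus\mathbb{D}$ it reads $(\mu z)^{m}=\mu^{m}z^{m}$; on $\overline{\mathbb{D}}_{r_0}$ it reads $r_0^{l}/(\mu z)^{l}=\mu^{-l}r_0^{l}/z^{l}=\mu^{m}r_0^{l}/z^{l}$; on $A_0$ replacing $t$ by $t+2\pi/(l+m)$ multiplies the value by $e^{-2\pi il/(l+m)}=\mu^{-l}=\mu^{m}$; on $A_1$ it multiplies the value by $e^{2\pi im/(l+m)}=\mu^{m}$; and on $\overline{\mathbb{A}}_{r_1,r_2}$ it is precisely the conclusion of Lemma \ref{an-to-disk}(3). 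Since the five pieces cover $\overline{\mathbb{C}}$, the identity holds for every $z$.

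I expect the only genuinely non-elementary input to be the gluing lemma upgrading ``continuous and piecewise quasiregular'' to ``quasiregular'' across the analytic circles $\mathbb{T}_{r_i}$, and this is entirely standard; the one place that calls for a little care is bookkeeping the orientation-reversing angular factor $e^{-ilt}$ in the IFS maps --- one must check both that the resulting map on $A_0$ is still orientation-preserving and that it matches the inner power map $z\mapsto r_0^{l}/z^{l}$ on $\mathbb{T}_{r_0}$ --- together with keeping the roots-of-unity factors straight in the symmetry computation.
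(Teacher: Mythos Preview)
Your proposal is correct and is exactly the verification the paper has in mind: the paper's own proof of this corollary is the single sentence ``Combine Lemma \ref{an-to-disk}, this can be verified by the definition of $F$,'' and you have simply written out that verification in full --- matching the five pieces on the circles $\mathbb{T}_{r_0},\mathbb{T}_{r_1},\mathbb{T}_{r_2},\mathbb{T}_1$, checking quasiregularity piecewise and gluing, counting preimages for the degree, and confirming the $\mu$-equivariance piece by piece.
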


\begin{prop}\label{straightening}
There exists a quasiconformal mapping $\varphi:\EC \rightarrow\EC $ such that $\varphi\circ F\circ\varphi^{-1}=f_\lambda$, where $f_\lambda$ is the McMullen map whose Julia set is a Cantor set of circles. In particular, Theorem \textup{\ref{qs-classifi} (b)} holds.
\end{prop}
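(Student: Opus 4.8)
The plan is to use the standard quasiconformal surgery paradigm: we have a quasiregular map $F:\EC\to\EC$ of degree $l+m$ (Corollary \ref{qr-symm}), and we want to find a complex structure that $F$ preserves, so that straightening it conjugates $F$ to a holomorphic map, which we must then identify as a McMullen map $f_\lambda$. First I would locate the non-conformality of $F$: by the construction in Lemma \ref{an-to-disk}, $F$ is holomorphic (indeed $z\mapsto r_0^l/z^l$ or $z\mapsto z^m$) on $\D_{r_0}$ and on $\EC\setminus\overline\D$, and fails to be conformal only on the annulus $\mathbb{A}_{r_1,r_2}$ where the interpolation lives. The key dynamical point is that every orbit passes through the ``conformal part'' eventually: the images $F(\mathbb{A}_{r_1,r_2})=\D_{r_0}$ and then $F(\D_{r_0})=\EC\setminus\overline\D$ and $F(\EC\setminus\overline\D)=\EC\setminus\overline\D$ stay in the region where $F$ is holomorphic, so the grand orbit of $\mathbb{A}_{r_1,r_2}$ under forward iteration hits the non-conformal locus at most once. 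Hence I would define a Beltrami coefficient $\mu$ by pulling back the standard structure $\mu_0\equiv 0$ along the backward orbit: set $\mu=\mu_0$ outside $\bigcup_{n\ge 0}F^{-n}(\mathbb{A}_{r_1,r_2})$, set $\mu=(F^{\circ 0})^*\mu_{\mathbb{A}}$ on $\mathbb{A}_{r_1,r_2}$ itself where $\mu_{\mathbb{A}}=\partial_{\bar z}F/\partial_z F$ is the dilatation of $F$ there, and on each further preimage $F^{-n}(\mathbb{A}_{r_1,r_2})$ pull this back by the holomorphic iterate $F^{\circ n}$. Because only finitely many (in fact at most one) pullbacks are by a non-conformal map, $\|\mu\|_\infty = \|\mu_{\mathbb{A}}\|_\infty<1$, so $\mu$ is a genuine Beltrami coefficient, and by construction $F^*\mu=\mu$.

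Next I would apply the Measurable Riemann Mapping Theorem to get a quasiconformal $\varphi:\EC\to\EC$ with $\varphi^*\mu_0=\mu$; then $g:=\varphi\circ F\circ\varphi^{-1}$ satisfies $g^*\mu_0=\mu_0$ a.e., hence is a holomorphic map of $\EC$, i.e.\ a rational map of degree $l+m$. It remains to show $g$ is conformally conjugate to a McMullen map $f_\lambda$ with Cantor-circles Julia set. For this I would use the symmetry: $F$ commutes with $z\mapsto e^{2\pi i/(l+m)}z$ up to the factor $e^{2m\pi i/(l+m)}$ (Corollary \ref{qr-symm}), and one can arrange $\mu$ to be invariant under the rotation $\rho:z\mapsto e^{2\pi i/(l+m)}z$ (the annulus $\mathbb{A}_{r_1,r_2}$, $\D_{r_0}$, and the dilatation $\mu_{\mathbb{A}}$ from Lemma \ref{an-to-disk}(3) are all $\rho$-symmetric, and preimages inherit this), so $\varphi$ can be chosen to conjugate $\rho$ to another rotation by $2\pi/(l+m)$; after a Möbius change of coordinates $\varphi$ conjugates $\rho$ to $\rho$ itself. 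Then $g$ commutes with $\rho$ in the same twisted way, has a superattracting fixed point of local degree $m$ at $\infty$ (coming from $z\mapsto z^m$ on $\EC\setminus\overline\D$, with $\varphi$ normalized to fix $\infty$) whose unique other preimage is a superattracting point of local degree $l$ (the image of $0$). A rational map of degree $l+m$ with these symmetry and critical-orbit properties must be affinely conjugate to $f_\lambda(z)=z^m+\lambda/z^l$ for some $\lambda\in\C^*$: normalizing the two superattracting points to $\infty$ and $0$ and using the $\rho$-equivariance forces the form $z^m+\lambda/z^l$ by comparing Laurent expansions. Finally, the attractor of the IFS $\{(F|_{A_0})^{-1},(F|_{A_1})^{-1}\}$ is, by construction, the set on which the dynamics of $F$ is not eventually in the basin of $\infty$; it is conformally the standard Cantor circles $A_{l,m}$, hence $\varphi$ maps it to $J_\lambda$, so $J_\lambda$ is a Cantor set of circles and $f_\lambda$ has escaping free critical points — this places $\lambda$ in the regime of Theorem \ref{E-T-T}(2) and, composed with the surgery, proves Theorem \ref{qs-classifi}(2) since $\varphi$ is quasiconformal and restricts to a quasisymmetric map between $A_{l,m}$ and $J_\lambda$.

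The main obstacle I expect is the identification step: verifying rigorously that the straightened rational map $g$ is \emph{exactly} of McMullen form rather than merely ``a degree-$(l+m)$ rational map with a pair of superattracting points of the right local degrees.'' One has to exploit the rotational symmetry carefully — showing $\mu$ is genuinely $\rho$-invariant on all preimages $F^{-n}(\mathbb{A}_{r_1,r_2})$ requires checking that the branches of $F^{-n}$ respect the symmetry — and then show that $\rho$-equivariance of the form $g(\rho z)=\rho^m g(z)$, together with the two superattracting points at $0$ and $\infty$ of local degrees $l$ and $m$, pins down $g$ up to scaling to $z^m+\lambda/z^l$. A secondary technical point is confirming that $\varphi$ really does send the IFS attractor onto $J_\lambda$: one checks that points in the complement of the grand orbit of $\D_{r_0}$ under $F$ are precisely the non-escaping points, their image under $\varphi$ is the Julia set of $g$ (a Cantor set of circles), and then transfers everything through the conjugacy to $f_\lambda$. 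Everything else — the MRMT application, the bound $\|\mu\|_\infty<1$, quasisymmetry of the restriction of a global quasiconformal map to the carpet/Cantor set — is routine.
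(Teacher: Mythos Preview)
Your approach is essentially the same as the paper's: pull back the standard structure through the $F$-orbit (using that each orbit meets the non-conformal annulus at most once) to get an invariant Beltrami coefficient with $\|\mu\|_\infty<1$, straighten via the Measurable Riemann Mapping Theorem, and then identify the resulting rational map using the rotational symmetry from Corollary~\ref{qr-symm}. The paper phrases the identification slightly differently---it normalizes $\varphi(0)=0$, $\varphi(z)/z\to 1$, writes $g(z)=(z-a_1)\cdots(z-a_{l+m})/z^l$, and uses the symmetry to conclude the $a_j$ are equidistributed on a circle about $0$, hence the numerator is $z^{l+m}+\lambda_0$---but this is equivalent to your Laurent/equivariance argument.

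One genuine omission: your surgery produces a \emph{single} parameter $\lambda_0$ in the McMullen domain with $J_{\lambda_0}$ quasisymmetrically equivalent to $A_{l,m}$, but Theorem~\ref{qs-classifi}(2) asserts this for \emph{every} $\lambda$ in that domain. The paper closes this gap in one line by noting that any two maps $f_\lambda,f_{\lambda_0}$ in the (hyperbolic) McMullen domain are quasiconformally conjugate on a neighborhood of their Julia sets; you should add this final step.
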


\begin{proof}
Define the standard complex structure $\sigma(z)=\sigma_0$ in $\EC \setminus\overline{\mathbb{D}}$. By the construction, almost all points in $\EC $ are iterated into $\EC \setminus\overline{\mathbb{D}}$ under $F$. If $F^{\circ n}(z)\not\in\EC \setminus\overline{\mathbb{D}}$ for all $n\geq 0$, define $\sigma(z)=\sigma_0$. Otherwise, there exists a minimal $n$ such that $F^{\circ n}(z)\in\EC \setminus\overline{\mathbb{D}}$. Then we define $\sigma(z)=(F^{\circ n})^*(\sigma_0)(z)$, which denotes the pull back of the standard complex structure in $\EC \setminus\overline{\mathbb{D}}$. Since all orbits pass through the annulus $\mathbb{A}_{r_1,r_2}$ at most once, it follows that $\sigma$ is a $F-$invariant almost complex structure on $\EC $.

Let $\varphi : (\EC ,\sigma) \rightarrow (\EC ,\sigma_0)$ be the integrating orientation preserving quasiconformal homeomorphism which is normalized by $\varphi(0)=0$ and $\varphi(z)/z\rightarrow 1$ as $z\rightarrow\infty$. So $\varphi\circ F\circ\varphi^{-1}$ is a rational map with degree $\ell+m$ since $F$ maps $\infty$ to $\infty$ with degree $m$ and $0$ is the only pole with multiplicity $\ell$ by the construction of $F$. This means that $\varphi\circ F\circ\varphi^{-1}$ has the form
\begin{equation*}
\varphi\circ F\circ\varphi^{-1}=(z-a_1)(z-a_2)\cdots(z-a_{\ell+m})/z^\ell,
\end{equation*}
where $a_1,\cdots,a_{\ell+m}$ are the images of the set of zeros of $F$ under $\varphi$.

By the symmetry stated in Corollary \ref{qr-symm}, it follows that $\{a_j\}_{j=1}^{\ell+m}$ lie on a circle centered at the origin uniformly. So $\varphi\circ F\circ\varphi^{-1}=f_{\lambda_0}$, where $f_{\lambda_0}(z)=z^m+\lambda_0/z^\ell$ is the McMullen map and $\lambda_0=(-1)^{\ell+m}a_1\cdots a_{\ell+m}$ is small and lies in the McMullen domain.

By \cite[Theorem 11.14]{Hei01}, an orientation preserving homeomorphism between the Riemann sphere $\EC$ is quasisymmetric if and only if it is quasiconformal. This means that the Julia set $J_{\lambda_0}$ is q.s. equivalent to the standard Cantor circles $A_{\ell,m}$. On the other hand, when $\lambda$ lies in the McMullen domain of $f_\lambda$, then $J_\lambda$ is q.s. equivalent to $J_{\lambda_0}$.
\end{proof}

\subsection{Q.S. uniformization of Sierpi\'{n}ski carpets}

For any subset $X\subset\EC $, the \emph{spherical diameter} of $X$ is defined as $\textup{diam}(X)=\sup_{x,y\in X}|x-y|$, where $|\cdot|$ denotes the \emph{spherical metric}. For any subset $X,Y\subset\EC $, let $\textup{dist}(X,Y):=\inf_{x\in X,y\in Y}|x-y|$ be the distance between $X$ and $Y$.

\begin{defi}
A Jordan curve $\gamma\subset\EC $ is called a \emph{quasicircle} if there exists a constant $k\geq 1$ such that for all distinct points $x,y\in\gamma$,
\begin{equation}\label{bound-turning}
\textup{diam}(I)\leq k\,|x-y|,
\end{equation}
where $I$ is the component of $\gamma\setminus\{x,y\}$ with smaller spherical diameter.
\end{defi}

It is well known that a Jordan curve $\gamma$ is a quasicirlce if it is the image of the unit circle under a quasiconformal homeomorphism defined from $\EC $ to itself. If the dilatation of this quasiconformal homeomorphism is bounded by a constant $K$, then this curve $\gamma$ is called a $K$--\textit{quasicircle}. Moreover, the constants $K$ and $k$ in \eqref{bound-turning} depend only on each other.

\begin{defi}
A family of Jordan curves $\{\gamma_i\}_{i\in\mathbb{N}}$ is said to consist of \emph{uniform quasicircles} if $\gamma_i$ is a $K$--quasicircle for every $i\in\mathbb{N}$, where $K$ is a constant independent of $i$.
A family of Jordan curves $\{\gamma_i\}_{i\in\mathbb{N}}$ is called \emph{uniformly relatively separated} if there exists a constant $s>0$ such that the \textit{relative distance} $\Delta(\gamma_i,\gamma_j)$ between $\gamma_i$ and $\gamma_j$ satisfies
\begin{equation}\label{uni-rel-sep}
\Delta(\gamma_i,\gamma_j):=\frac{\textup{dist}(\gamma_i,\gamma_j)}{\min\{\textup{diam}(\gamma_i),\textup{diam}(\gamma_j)\}}\geq s,
\end{equation}
whenever $i,j\in\mathbb{N}$ and $i\neq j$.
\end{defi}

\begin{thm}[{\cite[Corollary 1.2]{Bon11}}]\label{Bonk}
If $S$ is a Sierpi\'{n}ski carpet in $\EC$ whose peripheral circles are uniformly relatively separated and consists of uniform quasicircles, then there exists a quasisymmetric mapping which maps $S$ onto a round carpet.
\end{thm}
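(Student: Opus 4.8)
The plan is to run the classical carpet-uniformization scheme and to make every estimate in it \emph{quantitative and independent of the approximation}, which is where the two hypotheses are used. Write $S=\EC\setminus\bigcup_{i\in\N}D_i$, fix coordinates so that $\infty\in S$, let $K$ be a common quasicircle constant for the $\partial D_i$ and $s$ the constant of \eqref{uni-rel-sep}. For $n\geq 3$ set $\Omega_n=\EC\setminus\overline{D_1\cup\cdots\cup D_n}$, a planar domain with exactly $n$ (Jordan) boundary curves. By the Koebe uniformization theorem for finitely connected domains there is a conformal map $g_n\colon\Omega_n\to\Omega_n^*$ onto a \emph{circle domain} $\Omega_n^*=\EC\setminus\overline{B_1^{(n)}\cup\cdots\cup B_n^{(n)}}$, the complement of $n$ disjoint closed round disks (none degenerate, since the $\partial D_i$ are nondegenerate Jordan curves). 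Because each $\partial D_i$ is a quasicircle, $g_n$ extends to a homeomorphism of $\overline{\Omega_n}$; I normalize $g_n$ uniquely by prescribing the images of three fixed points $p_1\in\partial D_1$, $p_2\in\partial D_2$, $p_3\in\partial D_3$ (conformal automorphisms of a circle domain are M\"{o}bius, so a three-point condition pins $g_n$ down).

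The crux is: \emph{the restrictions $g_n|_S$ are $\eta$--quasisymmetric with $\eta$ depending only on $K$ and $s$} (in particular equicontinuous). I would prove this through Schramm's \emph{transboundary modulus}, the variant of extremal length designed for circle-domain uniformization, in which a curve may cross each complementary piece $D_i$ at the cost of a prescribed weight. Transboundary modulus of a curve family in $S$ is carried by $g_n$ to the transboundary modulus of the corresponding family in $\Omega_n^*$, so it suffices to compare, with constants independent of $n$, transboundary modulus on a carpet with the genuine conformal modulus of the ambient sphere. This comparison is exactly what the hypotheses provide: the uniform quasicircle condition lets one fill in each $D_i$ by a controlled quasiconformal homeomorphism and so estimate the modulus of curve families passing through it, while the uniform relative separation \eqref{uni-rel-sep} keeps the complementary disks from clustering, so that a definite fraction of the modulus of any admissible family is carried by honest arcs in $S$. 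Combining this with the fact that a circle domain whose round complementary disks are uniformly relatively separated is a Loewner-type space for transboundary modulus, one obtains two-sided control of relative distances among points of $S$ under $g_n$, uniformly in $n$ --- the claimed quasisymmetry bound.

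Granting the uniform bound, the passage to the limit is soft. By equicontinuity and the normalization, a subsequence of $(g_n)$ converges uniformly on the compact set $S$ to a map $g$; after a further diagonal extraction we may also assume that, for each fixed $i$, the round disk $B_i^{(n)}$ (defined once $n\geq i$) converges to a round disk $D_i^*$, its centre and radius being controlled by the distortion estimates and $D_i^*$ being nondegenerate since $\partial D_i$ cannot collapse to a point under an $\eta$--quasisymmetric map obeying the three-point normalization. The limit $g|_S$ is $\eta$--quasisymmetric, hence --- being non-constant --- injective; therefore the round circles $g(\partial D_i)=\partial D_i^*$ are pairwise disjoint, and $\diam(\partial D_i^*)\to 0$ because $\diam(\partial D_i)\to 0$ and $g$ is uniformly continuous. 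As $g$ is then a homeomorphism from the carpet $S$ onto $g(S)=\EC\setminus\bigcup_i D_i^*$, the image is again a Sierpi\'{n}ski carpet and its peripheral circles are round; so $g$ is the desired quasisymmetric map onto a round carpet.

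The main obstacle is the uniform modulus comparison of the second paragraph: everything else (Koebe uniformization, normalization, subsequential limit, recognition of the image as a round carpet) is formal and touches the hypotheses only through the $\eta$ produced there. Making the transboundary-extremal-length estimates genuinely uniform in $n$ requires Schramm's theory together with the Loewner-type estimates for circle domains, and it is precisely at this point that one must use \emph{simultaneously} the uniform quasicircle bound --- to route curves through the holes with controlled distortion --- and the uniform relative separation --- to prevent the holes, and their images, from accumulating on one another. This is the substance of the argument and the only step that is not essentially bookkeeping.
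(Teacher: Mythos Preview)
The paper does not prove this theorem at all: it is stated as a citation of \cite[Corollary~1.2]{Bo} and used as a black box, with no accompanying argument. There is therefore nothing in the paper to compare your proposal against.

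That said, your outline is a faithful high-level sketch of Bonk's actual proof in \cite{Bo}: approximate by finitely connected subdomains, uniformize each conformally onto a circle domain via Koebe, obtain \emph{uniform} quasisymmetry of the restrictions to $S$ by comparing Schramm's transboundary (carpet) modulus on both sides, and pass to a subsequential limit. Your identification of the crux is also correct --- the entire difficulty lies in the uniform two-sided modulus estimates, and it is precisely there that the uniform-quasicircle and uniform-relative-separation hypotheses enter. What you have written is an accurate roadmap but not a proof: the transboundary-modulus comparison you invoke occupies the bulk of \cite{Bo} and requires, among other things, a Loewner-type property for carpets with uniformly relatively separated uniform quasicircles, a three-point/weak-tangent argument to pass between modulus bounds and quasisymmetry, and care to show the limit carpet has no degenerate or touching peripheral circles. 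If you intend to include a self-contained proof you will need to supply those estimates; otherwise, citing \cite{Bo} as the paper does is the appropriate course.
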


In order to prove Theorem \ref{qs-classifi}(c), it suffices to show that if the free critical points escape and if $J_\lambda$ is a Sierpi\'{n}ski carpet, then the collection of the boundaries of the Fatou components of $f_\lambda$ forms a family of uniform quasicircles and they are uniformly relatively separated.

Let $A$ be an annulus with non-degenerate boundary components. Then there exists a conformal map sending $A$ to a standard annulus $\{z\in\C:0<r<|z|<1\}$, where $r>0$ is uniquely determined by $A$. As an invariant under conformal maps, the \textit{modulus} of $A$ is defined as $\textup{mod}(A)=\frac{1}{2\pi}\log(1/r)$. A set in $\EC$ is called a \textit{Jordan disk} if it is homeomorphic to the unit disk $\D$ and its boundary is a Jordan curve. Note that the Sierpi\'{n}ski carpets that we will study are contained in $\C$. To simplify the notation, we still use $\dist(\cdot,\cdot)$ and $\diam(\cdot)$, respectively, to denote the Euclidean distance and diameter in $\C$.

\begin{lema}\label{estimation}
Let $U\Subset D\neq \C$ be a pair of simply connected domains. Suppose that $\partial U$ is a $K$-quasicircle and $\textup{mod}(D\setminus \overline{U}) \geq m>0$. Then for any univalent map $f:D\rightarrow \mathbb{C}$, the curve $f(\partial U)$ is a $C(K,m)$--quasicircle, where $C(K,m)$ is a constant depending only on $K$ and $m$.
\end{lema}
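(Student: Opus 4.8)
The plan is to exploit the fact that quasicircles in the plane are preserved, with controlled constant, under maps that are quasiconformal on a neighborhood, and to promote the univalent map $f$ to such a quasiconformal map by using the modulus hypothesis to ``fill in'' the missing part of the sphere. Concretely, $\partial U$ being a $K$-quasicircle means there is a quasiconformal homeomorphism $h\colon\overline{\mathbb C}\to\overline{\mathbb C}$ with dilatation bounded in terms of $K$ carrying the unit circle $\T$ to $\partial U$; equivalently, the bounded-turning constant $k$ in \eqref{bound-turning} is controlled by $K$. Our goal is to produce, from $f$, a global quasiconformal map of $\overline{\mathbb C}$ whose dilatation is bounded in terms of $K$ and $m$ and which agrees with $f$ on $\partial U$, after which $f(\partial U)$ is automatically a $C(K,m)$-quasicircle.

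The key steps, in order, are as follows. First, normalize: post-compose $f$ by an affine map so that $f(U)$ is, say, contained in $\D$ with a definite size; this changes nothing about the quasicircle constant of $f(\partial U)$. Second, restrict attention to the annulus $\D\setminus\overline U$, whose modulus is $\geq m$. By the Koebe distortion theorem applied to the univalent map $f$ on $\D$, the image $f(\D\setminus\overline U)$ is again an annulus separating $f(U)$ from $f(\partial\D)=:\Gamma'$, and one shows $\Gamma'$ is a quasicircle with constant depending only on absolute constants (Koebe), and that $\mathrm{mod}\big(f(\D)\setminus\overline{f(U)}\big)\geq m'=m'(m)>0$ — because the modulus of a ring domain is a conformal invariant and $f$ is conformal on $\D$, in fact the inner modulus is preserved exactly, $\mathrm{mod}(f(\D\setminus\overline U))=\mathrm{mod}(\D\setminus\overline U)\geq m$. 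Third, extend $f|_{\partial U}$ across the complement: on the Jordan disk $U$, $f\colon U\to f(U)$ is already conformal, so it is $1$-quasiconformal there; on the outside region $\overline{\mathbb C}\setminus\overline{\D}$, which is a disk bounded by the quasicircle $\partial\D$ mapping to the quasicircle $\Gamma'$, use a quasiconformal extension of $f|_{\partial\D}$ — here the extension exists and its dilatation is bounded because $\Gamma'$ is a quasicircle of absolutely bounded constant and the two complementary components have comparable ``size'' thanks to the modulus bound on the separating annulus; finally, on the annulus $\D\setminus\overline U$ itself, $f$ is already conformal, hence $1$-quasiconformal. Gluing these three pieces along $\partial U$ and $\partial\D$ gives a global quasiconformal self-map $F$ of $\overline{\mathbb C}$ with $F|_{\partial U}=f|_{\partial U}$ and dilatation bounded by a function of $K$ and $m$ only. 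Since $\partial U$ is a $K$-quasicircle, its image $F(\partial U)=f(\partial U)$ is a $C(K,m)$-quasicircle.

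The main obstacle is the gluing step on the outer disk $\overline{\mathbb C}\setminus\overline{\D}$: one must control the dilatation of a quasiconformal extension of the boundary homeomorphism $f|_{\partial\D}\colon\partial\D\to\Gamma'$, and a crude extension can have dilatation depending on the global geometry of $f(U)$ relative to $\Gamma'$ (e.g. if $f(U)$ is tiny and eccentrically placed inside $\Gamma'$). This is precisely where the modulus hypothesis $\mathrm{mod}(\D\setminus\overline U)\geq m$ is essential: it forces, via conformal invariance under $f$, a definite ``conformal buffer'' between $f(U)$ and $\Gamma'$, so that after a Möbius normalization placing $f(U)$ in a standard position the extension can be taken with dilatation bounded in terms of $m$ alone. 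An alternative, cleaner route for this step is to avoid extension altogether: apply Bonk's or the classical characterization that a Jordan curve $\gamma$ is a $k$-quasicircle iff for all $x,y\in\gamma$ the bounded-turning inequality \eqref{bound-turning} holds, and verify \eqref{bound-turning} for $f(\partial U)$ directly, using the Koebe distortion theorem to transfer the bounded-turning property of $\partial U$ through $f$, with the modulus bound supplying the lower estimate $|f(x)-f(y)|\gtrsim$ (diameter of the corresponding sub-arc) that Koebe alone does not give near $\partial U$. Either way, the quantitative input is Koebe distortion plus conformal invariance of modulus, and the output constant depends only on $K$ and $m$.
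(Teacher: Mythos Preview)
Your primary approach---building a global quasiconformal extension of $f$ by filling in the outer disk $\overline{\C}\setminus\overline{\D}$---has a genuine gap, and the obstacle is not the one you flag. You assert that $\Gamma':=f(\partial\D)$ is a quasicircle with absolutely bounded constant, citing Koebe. But $f$ is only assumed univalent on the \emph{open} disk $\D$: it need not extend continuously to $\partial\D$ at all, and even when it does, $\partial f(\D)$ can be arbitrarily bad (not locally connected, carrying cusps, of positive area, etc.). Koebe distortion controls $f$ only on compact subsets of $\D$ and says nothing about boundary regularity of $f(\D)$. So there is no boundary homeomorphism $f|_{\partial\D}$ to extend, and the gluing step collapses before the dilatation question you worry about is even posed.

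Your ``alternative, cleaner route''---verifying the bounded-turning condition \eqref{bound-turning} for $f(\partial U)$ directly via Koebe distortion---is exactly what the paper does, and it works. The modulus hypothesis $\textup{mod}(\D\setminus\overline U)\geq m$ guarantees that $\overline U$ sits uniformly compactly in $\D$, so Koebe gives a two-sided cross-ratio distortion estimate on $\overline U$: for distinct $x,y,z,w\in\overline U$,
\[
\frac{1}{C(m)}\,\frac{|x-y|}{|z-w|}\ \le\ \frac{|f(x)-f(y)|}{|f(z)-f(w)|}\ \le\ C(m)\,\frac{|x-y|}{|z-w|}.
\]
Given two points on $f(\partial U)$ and the smaller-diameter subarc $\gamma$ between them, one pulls back by $f^{-1}$, applies the bounded-turning of $\partial U$ to whichever of $f^{-1}(\gamma)$ or its complementary arc has smaller diameter (a short case split), and reads off $\diam(\gamma)/|x-y|\le C(m)\,C(K)$. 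No extension is needed. Your remark that the modulus bound supplies something ``that Koebe alone does not give near $\partial U$'' is slightly backwards: the modulus bound is precisely the hypothesis that makes the Koebe constant uniform on $\overline U$, nothing more.
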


\begin{proof}
We claim that there exists a constant $C_1(m)\geq 1$ depending only on $m$ such that for any different $x,y,z,w\in\overline{U}$, then
\begin{equation}\label{disto-z}
\frac{1}{C_1(m)}\frac{|x-y|}{|z-w|}\leq\frac{|f(x)-f(y)|}{|f(z)-f(w)|}\leq C_1(m)\frac{|x-y|}{|z-w|}.
\end{equation}
Indeed, let $\varphi_1:\D\to D$ be a conformal map. Then $\varphi_2:=f\circ\varphi_1:\D\to f(D)$ is also conformal.
Since $\textup{mod}(D\setminus \overline{U}) \geq m>0$, there exists a positive constant $r=r(m)<1$ such that $\varphi_1^{-1}(\overline{U})\subset\overline{\D}_r=\{\zeta:|\zeta|\leq r\}$.
Let $x'$ and $y'$, respectively, be the preimages of $x$ and $y$ in $\varphi_1^{-1}(\overline{U})$ under $\varphi_1$.
By Koebe's distortion theorem, we have
\begin{equation*}
\begin{split}
\frac{|f(x)-f(y)|}{|x-y|}=&~\frac{|\varphi_2(x')-\varphi_2(y')|}{|\varphi_1(x')-\varphi_1(y')|}
=\frac{|\varphi_2(x')-\varphi_2(y')|}{|x'-y'|} \cdot \frac{|x'-y'|}{|\varphi_1(x')-\varphi_1(y')|}\\
\leq &~\frac{\max_{\zeta\in\overline{\D}_r}|\varphi_2'(\zeta)|}{\min_{\zeta\in\overline{\D}_r}|\varphi_1'(\zeta)|}\leq \frac{|\varphi_2'(0)|}{|\varphi_1'(0)|}\cdot\left(\frac{1+r}{1-r}\right)^4.
\end{split}
\end{equation*}
Similarly, we have
\begin{equation*}
\frac{|f(z)-f(w)|}{|z-w|}\geq \frac{|\varphi_2'(0)|}{|\varphi_1'(0)|}\cdot\left(\frac{1-r}{1+r}\right)^4.
\end{equation*}
This implies that \eqref{disto-z} holds if we set $C_1(m)=\big(\tfrac{1+r}{1-r}\big)^8$, where $r=r(m)<1$.

\medskip

Since $\partial U$ is a $K$-quasicircle, there exists a constant $C_2(K)>0$ such that for any different points $z_1,z_2\in \partial U$, one has
\begin{equation}\label{quasi-circle}
\frac{\textup{diam}(I)}{|z_1-z_2|}\leq C_2(K),
\end{equation}
where $I$ is one of the components of $\partial U \setminus\{z_1,z_2\}$ with smaller diameter.

Let $x'',y''$ be two different points on $f(\partial U)$ which divide the quasicircle $f(\partial U)$ into two closed subcurves $\beta$ and $\gamma$. Without loss of generality, we assume that $\gamma\subset f(\partial U)$ is the subcurve with smaller diameter. Moreover, let $z'',w''\in \gamma$ such that diam$(\gamma)=|z''-w''|$. Applying \eqref{disto-z} to $f^{-1}$, we have
\begin{equation}\label{disto-1}
\frac{\diam (\gamma)}{|x''-y''|}=\frac{|z''-w''|}{|x''-y''|}\leq C_1(m)\frac{|f^{-1}(z'')-f^{-1}(w'')|}{|f^{-1}(x'')-f^{-1}(y'')|}.
\end{equation}
Note that $f^{-1}(x'')$ and $f^{-1}(y'')$ divide $\partial U$ into two parts $f^{-1}(\beta)$ and $f^{-1}(\gamma)$ .

We divide the argument into two cases. Case I: If $\text{diam}(f^{-1}(\gamma))\leq \text{diam}(f^{-1}(\beta))$, then by \eqref{quasi-circle} and \eqref{disto-1}, we have
\begin{equation}\label{disto-11}
\frac{\diam (\gamma)}{|x''-y''|}\leq C_1(m)\frac{\textup{diam}(f^{-1}(\gamma))}{|f^{-1}(x'')-f^{-1}(y'')|}\leq C_1(m)C_2(K).
\end{equation}
Case II: If \textup{diam}$(f^{-1}(\gamma))> \text{diam}(f^{-1}(\beta))$, let  $z'',w''$ be two points in $\beta$ such that diam$(\beta)=|z''-w''|$. By \eqref{disto-z} and \eqref{quasi-circle}, we have
\begin{equation}\label{disto-2}
\begin{split}
      \frac{\diam (\gamma)}{|x''-y''|}
& \leq \frac{\diam (\beta)}{|x''-y''|} = \frac{|z''-w''|}{|x''-y''|}\leq C_1(m)\frac{|f^{-1}(z'')-f^{-1}(w'')|}{|f^{-1}(x'')-f^{-1}(y'')|}\\
& \leq C_1(m)\frac{\textup{diam}(f^{-1}(\beta))}{|f^{-1}(x'')-f^{-1}(y'')|}\leq C_1(m)C_2(K).
\end{split}
\end{equation}
The lemma follows by combining \eqref{disto-11} and \eqref{disto-2}.
\end{proof}

\begin{lema}[{\cite[Theorem 2.5]{McM94b}}]\label{mod-diam}
Let $A\subset\mathbb{C}$ be an annulus with core curve $\beta$ and with modulus $\textup{mod}(A)>m>0$. Let $U$ be the bounded component of $\mathbb{C}\setminus A$. Then in the Euclidean metric, $$\textup{dist}(U,\beta)>C(m)\,\textup{diam}(\beta),$$
where $C(m)>0$ is a constant depending only on $m$.
\end{lema}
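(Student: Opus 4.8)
The plan is to uniformize $A$ and read the estimate off from the Koebe distortion theorem. Write $M:=\textup{mod}(A)>m$, let $\phi:A\to\{1<|w|<e^{2\pi M}\}$ be a conformal isomorphism, and put $\psi:=\phi^{-1}$; then $\gamma=\phi^{-1}(\{|w|=e^{\pi M}\})$ by the definition of the core curve. Set $\rho:=e^{\pi M/2}>1$. The round annulus $\{e^{\pi M/2}<|w|<e^{3\pi M/2}\}$ lies in the domain of $\psi$ and has $\{|w|=e^{\pi M}\}$ as its core circle, so after the substitution $w=e^{\pi M}\zeta$ we obtain a univalent map $g(\zeta):=\psi(e^{\pi M}\zeta)$ on $\mathbb{A}_\rho:=\{\rho^{-1}<|\zeta|<\rho\}$ with $g(\{|\zeta|=1\})=\gamma$. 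Moreover, since $\psi(\{1<|w|<s\})$ accumulates on $\partial D$ as $|w|\to 1$, the bounded complementary component $D$ of $A$ is enclosed by the Jordan curve $g(\{|\zeta|=t\})$ for every $t\in(\rho^{-1},1)$.

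Next I would fix $t_0:=1-\tfrac14(1-\rho^{-1})\in(\rho^{-1},1)$ and record two distortion estimates with constants depending only on $\rho$. Applying the Koebe distortion theorem to $g$ on the discs $\{|\zeta-\zeta_0|<\tfrac12(1-\rho^{-1})\}$ with $|\zeta_0|=1$, and chaining around the unit circle, one obtains $0<C_2(\rho)\le C_1(\rho)<\infty$ with
\begin{equation*}
\textup{diam}(\gamma)\le C_1(\rho)\,|g'(1)|\qquad\text{and}\qquad\textup{dist}\big(\gamma,\ g(\{|\zeta|=t_0\})\big)\ge C_2(\rho)\,|g'(1)|.
\end{equation*}
The first comes from covering $\{|\zeta|=1\}$ by boundedly many Koebe discs; the second from the Koebe one–quarter theorem, since $g$ maps $\{|\zeta-\zeta_0|<\tfrac18(1-\rho^{-1})\}$ onto a region containing a disc about $g(\zeta_0)$ of radius $\asymp|g'(1)|$, and that disc is disjoint from $g(\{|\zeta|=t_0\})$ because $\{|\zeta|=t_0\}$ does not meet $\{|\zeta-\zeta_0|<\tfrac18(1-\rho^{-1})\}$.

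Finally I would combine the two. Writing $\beta:=g(\{|\zeta|=t_0\})$, we have that $D$ is enclosed by $\beta$ while $\gamma$ encloses $\beta$, so every straight segment joining a point of $\gamma$ to a point of $\overline D$ must cross $\beta$; hence $\textup{dist}(D,\gamma)\ge\textup{dist}(\beta,\gamma)\ge C_2(\rho)\,|g'(1)|\ge\tfrac{C_2(\rho)}{C_1(\rho)}\,\textup{diam}(\gamma)$. Setting $C(m):=\inf_{M>m}C_2(e^{\pi M/2})/C_1(e^{\pi M/2})$ finishes the proof, provided this infimum is strictly positive, and that is where I expect the only real care to be needed: the Koebe constants $C_1(\rho),C_2(\rho)$ degenerate as $\rho\downarrow 1$ (i.e.\ $\textup{mod}(A)\downarrow 0$), so one must verify that their ratio stays bounded away from $0$ on $\{M>m\}$. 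It is continuous and positive for every $M>0$, and as $M\to\infty$ the room $1-\rho^{-1}\to 1$, so $C_1(\rho),C_2(\rho)$ tend to finite positive limits (consistent with $\textup{dist}(D,\gamma)/\textup{diam}(\gamma)\to\tfrac12$ in the round model); one must also keep track of the nesting of the level curves $g(\{|\zeta|=t\})$ so that $D$, which may be much smaller than the region bounded by $\beta$, is still correctly placed inside $\beta$. Everything else is routine.
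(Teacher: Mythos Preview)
The paper does not prove this lemma; it is quoted from McMullen's monograph \cite[Theorem~2.5]{Mc2} and used as a black box. So there is nothing to compare against in the paper itself.

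Your argument via uniformization and Koebe distortion is correct and is essentially the standard proof. The two concerns you flag at the end are not real obstacles. First, the positivity of $C(m)=\inf_{M>m}C_2(e^{\pi M/2})/C_1(e^{\pi M/2})$ is immediate: since $M>m>0$ you have $\rho=e^{\pi M/2}\ge e^{\pi m/2}>1$, so $1-\rho^{-1}$ is bounded below by a constant depending only on $m$; the Koebe constants therefore never approach their degenerate regime on the range in question, and together with your observation about the limit as $M\to\infty$ this gives a positive infimum with no further work. Second, the nesting issue is handled simply by \emph{choosing} the uniformization $\phi$ so that the inner boundary circle $\{|w|=1\}$ corresponds to the $\partial D$ side of $A$ (one of the two conformal isomorphisms does this). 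Then every level curve $\psi(\{|w|=s\})$ is a Jordan curve in $A$ separating the two boundary components of $A$, hence separating $D$ from $\infty$; since $\psi$ is a homeomorphism on the full round annulus, the level curves are nested in the obvious order, and in particular $D$ lies in the bounded component of $\mathbb{C}\setminus\beta$ while $\gamma$ lies in the unbounded component. Everything else in your outline is routine as you say.
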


Lemma \ref{mod-diam} shows that if the lower bound of $\textup{mod}(A)$ is controlled then one can control the lower bound of $\text{dist}(U,\mathbb{C}\setminus \overline{A\cup U})/\text{diam}(U)$.

\begin{thm}\label{unif-quasicircle-sep}
Suppose that the free critical points of $f_\lambda$ escape and $J_\lambda$ is a Sierpi\'{n}ski carpet. Let $\{\gamma_i\}_{i\in\mathbb{N}}$ be the collection of the boundaries of all the Fatou components of $f_\lambda$. Then there exists a constant $K\geq 1$ such every $\gamma_i$ is a $K$--quasicircle and $\{\gamma_i\}_{i\in\mathbb{N}}$ are uniformly relatively separated.
\end{thm}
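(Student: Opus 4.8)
The plan is to exploit that $f_\lambda$ is \emph{hyperbolic}. Since the free critical points escape, every critical point of $f_\lambda$ (namely $0$, $\infty$ and the $\omega_j$) lies in the Fatou set and is attracted to $\infty$; hence the postcritical set $P$ satisfies $\overline{P}\cap J_\lambda=\emptyset$ and $f_\lambda$ is uniformly expanding near $J_\lambda$ in a conformal metric, so $J_\lambda$ is a conformal repeller. I will invoke the ensuing \emph{bounded distortion property}: there are $\delta_0>0$ and $C_1\geq1$, independent of $n$, such that every branch of $f_\lambda^{-n}$ based at a point of $J_\lambda$ is univalent, with distortion at most $C_1$, on a conformal ball of radius $\delta_0$ about its base point. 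As Euclidean and conformal distances are comparable near the compact set $J_\lambda$, I will phrase everything Euclideanly.

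The first step is to isolate the finite family $\mathcal{V}$ of \emph{model} Fatou components, those containing a critical point or a point of $P$: $\mathcal{V}=\{B_\lambda,T_\lambda\}\cup\mathcal{W}$, where $\mathcal{W}$ is the finite set of components meeting the free critical orbit(s). Each $\partial V$ with $V\in\mathcal{V}$ is a quasicircle: $B_\lambda$ contains no critical point but $\infty$, so $f_\lambda|_{B_\lambda}$ is conformally $w\mapsto w^m$ on a disc and $\partial B_\lambda$ is a quasicircle by hyperbolicity, while $\partial T_\lambda$ and the finitely many $\partial W$ ($W\in\mathcal{W}$) are then quasicircles by finitely many local univalent pull-backs of $\partial B_\lambda$ (no critical point lies on these boundaries). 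Let $K_0$ be a common quasicircle constant, and, for each $V\in\mathcal{V}$, fix a Jordan neighbourhood $\widehat V\supset\overline V$ whose collar has modulus at least a uniform $m_*>0$ and lies in the $\delta_0$-neighbourhood of $\partial V$, so that pull-back branches extend to $\widehat V$. Producing this uniform Koebe space from hyperbolicity is the step I expect to be the main obstacle; everything else is combinatorics together with Corollary~\ref{unif-distor} and Lemma~\ref{mod-diam}.

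Next, for a Fatou component $U\notin\mathcal{V}$, its forward orbit eventually meets $B_\lambda$; let $N\geq1$ be the first time $f_\lambda^{\circ N}(U)\in\mathcal{V}$ and $V:=f_\lambda^{\circ N}(U)$. Then none of $U,\dots,f_\lambda^{\circ(N-1)}(U)$ contains a critical point, so $f_\lambda^{\circ N}|_U$ is univalent and is a local homeomorphism near $\partial U\subset J_\lambda$; by bounded distortion the corresponding inverse branch extends univalently, with distortion $\leq C_1$, to a Jordan domain $\widehat U\supset\overline U$ with $f_\lambda^{\circ N}(\widehat U)=\widehat V$, and Lemma~\ref{mod-diam} applied to the collar $\widehat U\setminus\overline U$ gives $B(\overline U,\varepsilon_0\,\diam(\partial U))\subset\widehat U$ for a uniform $\varepsilon_0\in(0,1]$. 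The uniform quasicircle statement is then immediate: the finitely many $\partial V$ are $K_0$-quasicircles, and for $U\notin\mathcal{V}$ one has $\partial U=(f_\lambda^{\circ N}|_{\widehat U})^{-1}(\partial V)$ with the inverse branch univalent on $\widehat V$ and $\textup{mod}(\widehat V\setminus\overline V)\geq m_*$, so Corollary~\ref{unif-distor} makes $\partial U$ a $C(K_0,m_*)$-quasicircle; take $K=\max\{K_0,C(K_0,m_*)\}$.

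Finally, for the uniform relative separation, consider $U_i\neq U_j$ with $\diam(\partial U_j)\leq\diam(\partial U_i)$ and set $\varepsilon_1:=\dist(\partial U_i,\partial U_j)/\diam(\partial U_j)$, which I may assume is $<\varepsilon_0/2$ (otherwise we are done). Applying the previous step to $U_j$ (with $N=0$, $V=U_j$, $\widehat U=\widehat V$ if $U_j\in\mathcal{V}$) gives $N$, $V\in\mathcal{V}$ and $\widehat U$ with $B(\overline{U_j},\varepsilon_0\diam(\partial U_j))\subset\widehat U$, on which $f_\lambda^{\circ N}$ is univalent with distortion $\leq C_1$; in particular $\dist(\partial\widehat U,\overline{U_j})\geq\varepsilon_0\diam(\partial U_j)$. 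Since $\varepsilon_1<\varepsilon_0$, $\partial U_i$ meets $\widehat U$, and as distinct peripheral circles are disjoint and $U_i\cap U_j=\emptyset$ we have $\partial U_i\cap\overline{U_j}=\emptyset$; hence $\partial U_i$ contains a subarc $\sigma$ running from the $\varepsilon_1\diam(\partial U_j)$-neighbourhood of $\overline{U_j}$ out to $\partial\widehat U$ (or $\sigma=\partial U_i$ if $\partial U_i\subset\widehat U$), so $\diam(\sigma)\geq\tfrac{\varepsilon_0}{2}\diam(\partial U_j)$ by Lemma~\ref{mod-diam}. Its image $\sigma':=f_\lambda^{\circ N}(\sigma)$ is a connected subset of a single peripheral circle $\gamma'$ of $J_\lambda$, and the distortion bound gives $\diam(\gamma')\geq\diam(\sigma')\geq c_3\diam(\partial V)$ for a uniform $c_3>0$, as well as $\dist(\sigma',\overline V)\leq C_1^{2}\varepsilon_1\diam(\partial V)$. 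But there are only finitely many peripheral circles of $J_\lambda$ of diameter at least $c_3\min_{V\in\mathcal{V}}\diam(\partial V)$, each disjoint from each $\overline V$; letting $c_5>0$ be the minimum of $\dist(\gamma',\overline V)/\diam(\partial V)$ over these finitely many pairs, and using $\sigma'\subset\gamma'$, we get
\begin{equation*}
C_1^{2}\varepsilon_1\diam(\partial V)\ \geq\ \dist(\sigma',\overline V)\ \geq\ \dist(\gamma',\overline V)\ \geq\ c_5\diam(\partial V),
\end{equation*}
so $\varepsilon_1\geq c_5/C_1^{2}$. Hence all relative distances are bounded below by $s:=\min\{\varepsilon_0/2,\,c_5/C_1^{2}\}>0$, which with the quasicircle bound proves the theorem.
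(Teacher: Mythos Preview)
Your argument is correct. For the uniform-quasicircle part it coincides with the paper's proof: both fix a finite stock of model boundaries (the paper keeps only the first $k{+}1$ generations $\gamma_{i,l}$, you keep $\{B_\lambda,T_\lambda\}\cup\mathcal W$), surround each by a collar of definite modulus disjoint from the postcritical set, observe that every other Fatou component is carried univalently onto a model by some iterate, and invoke Corollary~\ref{unif-distor}. One remark: the univalence of the inverse branch on all of $\widehat V$ (not just on the $\delta_0$--collar of $\partial V$) deserves a word, since $V$ itself may contain postcritical points; it follows because the branch $g$ is already defined on $V$ as $(f_\lambda^{\circ N}|_U)^{-1}$ and extends across $\partial V$ by analytic continuation, and $f_\lambda^{\circ N}\circ g=\mathrm{id}$ forces injectivity.

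For the uniform relative separation the two proofs diverge. The paper exploits the explicit hyperbolic contraction $C_1\rho^i\le\diam(\gamma_{i,l})\le C_2\rho^i$ to stratify the peripheral circles by generation, chooses the collar $A_0$ so thin that the annuli $A_{i,l}$ in any window of $N$ consecutive generations are pairwise disjoint, and then gets the separation directly from Lemma~\ref{mod-diam} by a two-case split ($|i-j|\le N$ versus $|i-j|>N$). Your route is instead a push-forward/compactness argument: map the smaller component to a model $V\in\mathcal V$, show that the nearby piece of the other boundary is sent to an arc on a peripheral circle of size comparable to $\partial V$, and conclude by the finiteness of large peripheral circles. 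The paper's approach is a little more self-contained (no appeal to ``only finitely many large circles''), while yours never needs the exponential decay estimate explicitly and would transplant verbatim to any hyperbolic carpet Julia set. Minor quibbles: the lower bound $\diam(\sigma)\ge\tfrac{\varepsilon_0}{2}\diam(\partial U_j)$ is just the triangle inequality, not Lemma~\ref{mod-diam}; and your distortion constants may pick up an extra factor of $C_1$ when passing from ratios of pairs to ratios of diameters, but this is harmless.
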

\begin{proof}
By doing a quasiconformal surgery, it can be shown that the boundary of the immediate basin of infinity $\partial B_\lambda$ is quasiconformally homeomorphic to the unit circle, which is the Julia set of $z\mapsto z^m$. This means that $\partial B_\lambda$ is a quasicircle. Hence as preimages of $\partial B_\lambda$, all $\gamma_i$ are quasicircles if $J_\lambda$ is a Sierpi\'{n}ski carpet (Note that all Fatou components are simply connected).

Choose a Jordan curve $\zeta$ in $\mathbb{C}\setminus \overline{B}_\lambda$ which is close enough to $\partial B_\lambda$ such that the annular region $A_0$ bounding by $\zeta$ and $\partial B_\lambda$ contains no free critical orbits. This is possible since all critical points are attracted by $\infty$. Then there exists a preimage $A_1$ of $A_0$ such that $A_1$ is an annulus whose bounded complementary component is $T_\lambda$.

For each $i\geq 1$, we use $\{\gamma_{i,l}\}_{1\leq l\leq k_i}$ to denote the set of the components of $f_\lambda^{-i}(\partial B_\lambda)\setminus f_\lambda^{-i+1}(\partial B_\lambda)$, where $k_i$ is the number of components of $f_\lambda^{-i}(\partial B_\lambda)\setminus f_\lambda^{-i+1}(\partial B_\lambda)$. In particular, $k_1=1$ and $\gamma_{1,1}=\partial T_\lambda$. For convenience, we set $k_0=1$ and denote $\gamma_{0,1}=\partial B_\lambda$. For each $i\geq 1$, every component of $f_\lambda^{-i}(A_0)$ is an annulus since $A_0$ is disjoint from the forward orbits of the critical points of $f_\lambda$. We use $A_{i,l}$ to denote the component of $f_\lambda^{-i}(A_0)$ whose boundary contains $\gamma_{i,l}$.
Let $U_{i,l}$ be the Fatou component of $f_\lambda$ bounded by $\gamma_{i,l}$ and denote $D_{i,l}=\overline{U}_{i,l}\cup A_{i,l}$. Then $U_{i,l}$ and $D_{i,l}$ are Jordan disks.

By Theorem \ref{E-T-T}(c), there exists $k'\geq 2$ such that all the critical points are iterated to $T_\lambda$ under $f_\lambda^{\circ k'}$. This means that the restriction of $f_\lambda$ on $D_{i,l}$ is conformal if $i\geq k'+1$. Therefore, there exists a constant $\widetilde{m}>0$ such that $\textup{mod}(A_{i,l})\geq \widetilde{m}$ for all $i\geq 0$ and $1\leq l\leq k_i$. By Lemma \ref{estimation}, it follows that $\{\gamma_{i,l}: i\geq 0 \text{ and } 1\leq l\leq k_i\}$ are uniform quasicircles.

\medskip

Let $\zeta$ be close to $\partial B_\lambda$ enough such that the disks in
$\{D_{i,l}:0\leq i\leq k' \text{ and } 1\leq l\leq k_i\}$
are pairwise disjoint.
For $i\geq 0$ and $1\leq l\leq k_i$, let $\beta_{i,l}$ be the core curve in $A_{i,l}$, and $V_{i,l}$ the Jordan disk bounded by $\beta_{i,l}$.
Let $U_{i,l_1}$ and $U_{j,l_2}$ be any two different Fatou components, which are preimages of $B_\lambda$, where $i\geq j\geq 0$.
If $V_{i,l_1}\cap V_{j,l_2}=\emptyset$, then by Lemma \ref{mod-diam}, there exists a constant $C(\tfrac{\widetilde{m}}{2})>0$ such that
\begin{equation*}
\frac{\textup{dist}(\gamma_{i,l_1},\gamma_{j,l_2})}{\min\{\textup{diam}(\gamma_{i,l_1}),\textup{diam}(\gamma_{j,l_2})\}}
\geq\min\left\{\frac{\textup{dist}(\gamma_{i,l_1},\beta_{i,l_1})}{\textup{diam}(\beta_{i,l_1})},
\frac{\textup{dist}(\gamma_{j,l_2},\beta_{j,l_2})}{\textup{diam}(\beta_{j,l_2})}\right\}
\geq C(\tfrac{\widetilde{m}}{2}).
\end{equation*}
If $V_{i,l_1}\cap V_{j,l_2}\neq\emptyset$, by the choice of the curve $\zeta$ we have $i\geq j+k'+1$ and hence $D_{i,l_1}\cap U_{j,l_2}=\emptyset$. Indeed, if $z\in D_{i,l_1}\cap U_{j,l_2}$, then $f_\lambda^{\circ j}(z)\in D_{j-i,l_3}\cap B_\lambda=\emptyset$, where $1\leq l_3\leq k_{j-i}$, which is a contradiction.
If $\diam(\gamma_{i,l_1})\leq \diam(\gamma_{j,l_2})$, by Lemma \ref{mod-diam}, we have
\begin{equation*}
\Delta(\gamma_{i,l_1},\gamma_{j,l_2})
=\frac{\textup{dist}(\gamma_{i,l_1},\gamma_{j,l_2})}{\textup{diam}(\gamma_{i,l_1})}
\geq\frac{\textup{dist}(\gamma_{i,l_1},\beta_{i,l_1})}{\textup{diam}(\beta_{i,l_1})}\geq C(\tfrac{\widetilde{m}}{2}).
\end{equation*}
If $\diam(\gamma_{i,l_1})> \diam(\gamma_{j,l_2})$, then
\begin{equation*}
\Delta(\gamma_{i,l_1},\gamma_{j,l_2})
\geq\frac{\textup{dist}(\gamma_{i,l_1},\gamma_{j,l_2})}{\textup{diam}(\gamma_{i,l_1})}
\geq C(\tfrac{\widetilde{m}}{2}).
\end{equation*}
To sum up, this means that $\{\gamma_{i,l}: i\geq 0 \text{ and } 1\leq l\leq k_i\}$ are uniformly relatively separated.
\end{proof}

Part of the result in Theorem \ref{unif-quasicircle-sep} can be obtained also by applying \cite[Lemma 2.1]{QYZ19}, which asserts that the uniform modulus implies uniformly relatively separated. The proof here is slightly different from the one there.

\begin{proof}[{Proof of Theorem \ref{qs-classifi}}]
(a) holds because of David and Semmes's result (see \cite[Proposition 15.11]{DS97}); (b) holds because of Proposition \ref{straightening}; (c) is the corollary of Theorems \ref{Bonk} and \ref{unif-quasicircle-sep}.
\end{proof}

\section{Quasisymmetrically inequivalent Cantor circles}\label{sec-qs-in-cc}

According to Theorem \ref{qs-classifi}(b), if the Julia set $J_\lambda$ of $f_\lambda(z)=z^m+\lambda/z^\ell$ is a Cantor set of circles, then $J_\lambda$ is q.s. equivalent to the standard Cantor circles $A_{\ell,m}$. A natural question is:  whether any two Cantor circles are q.s. \textit{inequivalent} for different pairs $(\ell,m)$? We will give a partial answer to this question in this section and we will see that this question depends on a very basic algebraic problem.

\subsection{Conformal dimension versus quasisymmetric inequivalence}

Let $X$ be a metric space. Recall that the \textit{conformal dimension} $\dim_{C}(X)$ of $X$ is the infimum of the Hausdorff dimensions of all metric spaces which are q.s. equivalent to $X$. Note that the conformal dimension is an invariant of the quasisymmetric class of a metric space. In the rest of this subsection, in order to emphasis the integers $\ell$, $m$ and for convenience, we omit the parameter $\lambda$ in the subscript and use $J_{\ell,m}$ to denote the Julia set of $f_\lambda(z)=z^m+\lambda/z^\ell$, where $\lambda$ is contained in the McMullen domain such that the corresponding Julia set is a Cantor set of circles. The idea of proving $J_{\ell_1,m_1}$ and $J_{\ell_2,m_2}$ are q.s. inequivalent to each other will be based on showing that they have different conformal dimensions.

Recall that $A_{\ell,m}$ is the standard Cantor circles defined in the introduction. By \cite[\S 3]{HP12a}, we have

\begin{lema}\label{conf-dim}
The conformal dimension $\dim_C (A_{\ell,m})=1+\alpha_{\ell,m}$, where $x=\alpha_{\ell,m}$ is the unique positive root of
\begin{equation*}
\ell^{-x}+m^{-x}=1.
\end{equation*}
In particular, if $J_{\ell,m}$ is a set of Cantor circles, then $\dim_C (J_{\ell,m})=1+\alpha_{\ell,m}$. This means that $1+\alpha_{\ell,m}$ is a lower bound of the Hausdorff dimension of $J_{\ell,m}$.
\end{lema}

\begin{lema}\label{no-solution}
Let $\ell_1,m_1,\ell_2,m_2\geq 2$ be $4$ positive integers satisfying $1/\ell_1+1/m_1<1$ and $1/\ell_2+1/m_2<1$. Suppose that $\ell_1\leq m_1$ and $\ell_1\leq \ell_2\leq m_2$. Then the system of equations with variable $x>0$
\begin{equation}\label{equ-no-solu}
\left\{
\begin{array}{ll}
\ell_1^{-x}+ m_1^{-x}=1 & \\
\ell_2^{-x}+ m_2^{-x}=1 &
\end{array}
\right.
\end{equation}
has no solution in any one of the following cases:
\begin{enumerate}
\item $\ell_1=\ell_2$ and $m_1\neq m_2$;
\item $\ell_1<\ell_2$ and $m_1\leq m_2$;
\item $\ell_1<\ell_2\leq m_2<m_1$ and $\ell_1+m_1=\ell_2+m_2$.
\end{enumerate}
\end{lema}

\begin{proof}
The first two cases are trivial. We only prove (c). Let $N=\ell_1+m_1=\ell_2+m_2$ and $0<x<1$. We consider the function
\begin{equation*}
\varphi_x(y)=(N-y)^{-x}+y^{-x},
\end{equation*}
where $0<y<N$. A direct calculation shows that
\begin{equation*}
\varphi_x'(y)=x[(N-y)^{-x-1}-y^{-x-1}].
\end{equation*}
Therefore, $\varphi_x$ is strictly decreasing on $(0,N/2]$ and strictly increasing on $[N/2,N)$. Since $\ell_1<\ell_2\leq m_2<m_1$ and $N=\ell_1+m_1=\ell_2+m_2$, it follows that $0<\ell_1<\ell_2\leq N/2$. So $\ell_2^{-x}+ m_2^{-x} <\ell_1^{-x}+ m_1^{-x}$ for any $0<x<1$. The proof is complete.
\end{proof}

The following result is an immediate corollary of Lemmas \ref{conf-dim} and \ref{no-solution}.

\begin{cor}\label{qs-ineq}
Let $(\ell_1,m_1)$ and $(\ell_2,m_2)$ be two pairs of integers satisfying $\ell_1\leq m_1$ and $\ell_1\leq \ell_2\leq m_2$. If they satisfy also any one of the three cases in Lemma \ref{no-solution}, then $J_{\ell_1,m_1}$ is not q.s.\,equivalent to $J_{\ell_2,m_2}$.
\end{cor}

We conjecture that \eqref{equ-no-solu} has no solution except $(\ell_1,m_1)=(\ell_2,m_2)$. More specifically, suppose that $J_{\ell_1,m_1}$ and $J_{\ell_2,m_2}$ are two sets of Cantor circles. We conjecture that\footnote{Note that this includes a very special case: $(\ell_1,m_1)\neq (\ell_2,m_2)$ but $(\ell_1,m_1)= (m_2,\ell_2)$, i.e. we conjecture that $J_{\ell,m}$ is not q.s. equivalent to $J_{m,\ell}$ if $\ell\neq m$ although they have the same conformal dimension.} $J_{\ell_1,m_1}$ is q.s. equivalent to $J_{\ell_2,m_2}$ if and only if $(\ell_1,m_1)=(\ell_2,m_2)$.

\subsection{More quasisymmetrically inequivalent Cantor circles}\label{subsec-quasis}

In order to find more rational maps whose Julia sets are Cantor circles, the following Theorem \ref{thm-QYY} was proved in \cite{QYY15}.

\begin{thm}[{\cite{QYY15}}]\label{thm-QYY}
For each $p\in\{0,1\}$ and positive integers $d_1,\cdots,d_n$ with $n\geq 2$ satisfying $\sum_{i=1}^{n}(1/d_i)<1$, there exist suitable parameters $a_1,\cdots,a_{n-1}$ such that the Julia set of
\begin{equation}\label{family-QYY}
f_{p,d_1,\cdots,d_n}(z)=z^{(-1)^{n-p} d_1}\prod_{i=1}^{n-1}(z^{d_i+d_{i+1}}-a_i^{d_i+d_{i+1}})^{(-1)^{n-i-p}}
\end{equation}
is a Cantor set of circles. Moreover, any rational map whose Julia set is a Cantor set of circles must be topologically conjugated to $f_{p,d_1,\cdots,d_n}$ for some $p$ and $d_1,\cdots,d_n$ on their corresponding Julia sets with suitable parameters $a_1,\cdots,a_{n-1}$.
\end{thm}

From the topological point of view, all Cantor circles are the same since they are all topologically equivalent (homeomorphic) to the `standard' Cantor circles $C_{3,3}\times \mathbb{T}$. Theorem \ref{thm-QYY} gives a complete topological classification of the Cantor circles as the Julia sets of rational maps under the dynamical behaviors. For the classifications of Cantor circle Julia sets of rational maps in the sense of quasisymmetric equivalence, one can refer to \cite{QYY16} and \cite{QY20}.

Let $J_{p,d_1,\cdots,d_n}$ be the Julia set of $f_{p,d_1,\cdots,d_n}$ for $n\geq 2$. In the following, we always assume that $a_i$ is chosen like in Theorem \ref{thm-QYY} such that $J_{p,d_1,\cdots,d_n}$ is a Cantor set of circles since we are only interested in this case. Meantime, we assume that $\lambda$ is small enough such the Julia set $J_{\ell,m}$ of the McMullen map $f_\lambda$ defined in \eqref{McMullen} is a Cantor set of circles, where $1/\ell+1/m<1$. If $d_i=n+1$ for every $1\leq i\leq n$, we use $f_n$ to denote $f_{p,n+1,\cdots,n+1}$ and let $J_n$ be its corresponding Julia set.

\begin{prop}\label{conf-dim-new}
The conformal dimension $\dim_C(J_{p,d_1,\cdots,d_n})=1+\alpha_{p,d_1,\cdots,d_n}$, where $x=\alpha_{p,d_1,\cdots,d_n}$ is the unique positive root of $\sum_{i=1}^{n}d_i^{-x}=1$.
In particular, if $d_i=n+1$ for every $1\leq i\leq n$, then $\alpha_n:=\alpha_{p,d_1,\cdots,d_n}=\log(n)/\log(n+1)$. If $n\neq n'$, then $\alpha_n\neq\alpha_{n'}$. If $n\geq 3$, then $\alpha_n\neq \alpha_{\ell,m}$ for every $\ell,m\geq 2$ with $1/\ell+1/m<1$.
\end{prop}

\begin{proof}
According to the proof of \cite[Theorem 1.1]{QYY15}, it follows that the combinatorics of $f_{p,d_1,\cdots,d_n}$ is determined by the data $(d_1,\cdots,d_n)\in\mathbb{N}^n$ in the sense of Ha\"{\i}ssinsky and Pilgrim \cite[$\S$2]{HP12b}. By Propositions 1.1 and 2.2 in \cite{HP12b}, the conformal dimension of the Julia set of $f_{p,d_1,\cdots,d_n}$ is $\dim_C(J_{p,d_1,\cdots,d_n})=1+\alpha_{p,d_1,\cdots,d_n}$, where $x=\alpha_{p,d_1,\cdots,d_n}$ is the unique positive root of $\sum_{i=1}^{n}d_i^{-x}=1$. In particular, if $d_i=n+1$ for every $1\leq i\leq n$, then $\alpha_n:=\alpha_{p,d_1,\cdots,d_n}=\log(n)/\log(n+1)$. This means that $n\neq n'$ is equivalent to $\alpha_n\neq\alpha_{n'}$.

For the last statement, we claim that if $n\geq 3$, then $x=\log(n)/\log(n+1)$ is not the solution of $\ell^{-x}+m^{-x}=1$ for any $\ell,m\geq 2$ with $1/\ell+1/m<1$. Without loss of generality, we assume that $2\leq \ell\leq m$. Then $1/\ell^x\geq 1/m^x$, where $x=\log(n)/\log(n+1)$. If $n\geq 3$, then
\begin{equation*}
\frac{1}{\ell^x}+\frac{1}{m^x}\leq \frac{1}{2^{\log 3/\log 4}}+\frac{1}{3^{\log 3/\log 4}}=0.9960381127\cdots<1
\end{equation*}
since $\log(n-1)/\log(n)<\log(n)/\log(n+1)$. The proof is complete.
\end{proof}

Proposition \ref{conf-dim-new} gives a specific example to verify that there exist hyperbolic rational maps whose Julia sets are Cantor circles and whose conformal dimensions are arbitrarily close to 2 (see \cite[Theorem 2]{HP12b}). If we notice that the conformal dimension is an invariant of the quasisymmetric class of a metric space, Proposition \ref{conf-dim-new} has following immediate corollary.

\begin{cor}
If $n\geq 3$, then $J_n$ is not quasisymmetrically equivalent to any $J_{\ell,m}$ for $1/\ell+1/m<1$.
\end{cor}

\section{The non-escaping case}\label{Non-escap-case}

Let $M$ be the Mandelbrot set. The quadratic polynomial $P_c(z)=z^2+c$, $c\in M$ is called \textit{hyperbolic} if and only if it has a bounded attracting periodic orbit. For any given hyperbolic component $H$ of $M$ and its root point $r_H$, $P_{r_H}$ has a unique parabolic periodic orbit with period $m$. It was known that $p(H)=vm$ for some integer $v\geq 1$ (see \cite[Lemma 6.3]{Mil00c}), where $p(H)$ is the period of the hyperbolic component of $H$ defined in the introduction. A hyperbolic component $H$ of $M$ is called \textit{primitive} if $v=1$ and it is called \textit{satellite} if $v\geq 2$. Intuitively, $H$ is primitive if and only if it is surrounded by a cardioid, or there exists no other hyperbolic components attached to the root of $H$.

\subsection{Renormalization theory}

Let us recall some results about the renormalization theory, which can be found in \cite{DH85b,McM94b}. Suppose that $U$, $V$ are two simply connected domain in $\mathbb{C}$ satisfying $\overline{U}\subset V$. The triple $(g,U,V)$ is called a \textit{polynomial-like} map if $g:U\rightarrow V$ is holomorphic and proper. The \textit{filled-in Julia set} of $(g,U,V)$ is defined as $K(g):=\bigcap_{n\geq 0}g^{-n}(U)$ on which all the iterates of $g$ are well-defined. The quadratic polynomial $P_c$ is called \textit{renormlizable} if there exist $n\geq 2$ and $U, V$ containing 0 but not containing the Julia set $J_{P_c}$, such that $(P_c^{\circ n},U,V)$ is  a polynomial-like map. The integer $n$ is called the \textit{period} of renormalization.

Let $K_j:=P_c^{\circ j}(K_0)$. Then $P_c(K_j)=K_{j+1}$, where $j=0,1,\cdots,n-1$ and $K_n=K_0$. Every $K_j$ is called a \textit{small Julia set} after renormalization and they are subsets of the Julia set $J_{P_c}$ of $P_c$. The interior of $K_i$ and $K_j$ are disjoint if $i\neq j$, where $0\leq i,j<n$. The landing point of the external ray with angle zero on $J_{P_c}$ is a repelling or parabolic fixed point, which is called the $\beta$-\textit{fixed point} of $P_c$. It is known the $\beta$-fixed point is disjoint with the small Julia sets \cite[Theorem 7.10]{McM94b}. If $K_i\cap K_j=\emptyset$ for $0\leq i,j<n$, then the renormalization is called \textit{disjoint} type.

Recall that $H_\heartsuit$ is the hyperbolic component of $M$ with period 1. The following result is a well known folk theorem, which can be derived out from \cite[Theorem 2.4, Lemma 2.7 and \S 6]{Mil00c}.

\begin{lema}\label{well-known}
Let $H$ be a hyperbolic component of $M$ but $H\neq H_\heartsuit$.
\begin{enumerate}
\item For any $c\in H$, then $P_c$ is renormlizable with period $p(H)$;
\item The renormalization is of disjoint type if and only if $H$ is primitive;
\item The closure of the bounded Fatou components of $P_c$ are disjoint to each other if and only if $H$ is primitive, where $c\in H\cup \{r_H\}$ and $r_H$ is the root of $H$.
\end{enumerate}
\end{lema}

\subsection{Maximal homeomorphic map}
Recall that $\Lambda$ defined as in (\ref{Lambda}) is the non-escaping locus of $f_\lambda$. It follows from \cite[\S 7]{Ste06} that every hyperbolic component in $\Lambda$ is a hyperbolic component of a copy of the Mandelbrot set $M$ in $\Lambda$. Let $\Phi_\mathcal{M}:M\rightarrow \mathcal{M}$ be the orientation preserving homeomorphism between the Mandelbrot set and its copy in $\Lambda$ such that $\Phi_\mathcal{M}$ maps each of the hyperbolic components of $M$ onto one of that in $\Lambda$. The homeomorphism is required to satisfy the following two conditions: (i) If $c\in H$ such that $P_c$ has an attracting period orbit with multiplier $\zeta\in\mathbb{D}$, then $\Phi(c)\in \mathcal{H}$ is a parameter such the McMullen map $f_{\Phi(c)}$ has also an attracting periodic orbit with multiplier $\zeta$, where $\mathcal{H}=\Phi_\mathcal{M}(H)$ is the \textit{hyperbolic component} of $\mathcal{M}$. The \textit{root} of $\mathcal{H}$ is defined by $r_\mathcal{H}=\Phi_\mathcal{M}(r_H)$. (ii) The homeomorphism $\Phi_\mathcal{M}:M\rightarrow \mathcal{M}$ is maximal. Specifically, if $\Phi_{\mathcal{M}'}:M\rightarrow \mathcal{M'}$ is a homeomorphism satisfying condition (i), then $\mathcal{M}'\subset\mathcal{M}$.
\begin{defi}
The homeomorphic image $\mathcal{M}$ is called a \emph{maximal} copy of the Mandelbrot set in $\Lambda$ and the map $\Phi_{\mathcal{M}}$ is called a \emph{maximal homeomorphic map}.
\end{defi}
For each hyperbolic component $\mathcal{H}$ in $\Lambda$, there exists a unique maximal copy of the Mandelbrot set in $\Lambda$ containing $\mathcal{H}$.
The hyperbolic component $\mathcal{H}$ is called \textit{satellite} or \textit{primitive} if the inverse of $\mathcal{H}$ under the maximal homeomorphic map is satellite or primitive.

For $\lambda\in\mathbb{C}^*$, recall that the ``filled-in" Julia set of $f_\lambda$ is defined as
\begin{equation*}
K_\lambda=\{z\in\mathbb{C}: \{f_\lambda^{\circ k}(z)\}_{k\geq 0} \text{ is bounded}\}.
\end{equation*}
If $\lambda\in\mathbb{C}^*\setminus\Lambda$, then $K_\lambda=J_\lambda$. If $\lambda\in\Lambda$, there maybe exist Fatou components in $K_\lambda$ whose forward orbits are disjoint with the immediate basin of the infinity $B_\lambda$. The following result is a direct corollary of Lemma \ref{well-known}.

\begin{lema}\label{from-well-known}
Let $\mathcal{H}$ be a hyperbolic component of the maximal copy $\mathcal{M}=\Phi_{\mathcal{M}}(M)$ in $\Lambda$ such that $\mathcal{H}\neq\Phi_{\mathcal{M}}(H_\heartsuit)$. For $\lambda\in \mathcal{H}\cup r_\mathcal{H}$, the closure of the components of $\Int (K_\lambda)$ are disjoint to each other if and only if $\mathcal{H}$ is primitive, where $r_\mathcal{H}$ is the root of $\mathcal{H}$.
\end{lema}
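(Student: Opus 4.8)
The plan is to transport, through the renormalization furnished by the maximal homeomorphic map, the combinatorial incidence pattern of the bounded Fatou components of a quadratic polynomial. First I would fix $\lambda\in\mathcal{H}\cup r_\mathcal{H}$ and set $c:=\Phi_{\mathcal{M}}^{-1}(\lambda)\in H\cup\{r_H\}$, where $H:=\Phi_{\mathcal{M}}^{-1}(\mathcal{H})\neq H_\heartsuit$. By the defining property of $\Phi_{\mathcal{M}}$ and the theory of Mandelbrot-like families \cite{DH}, $f_\lambda$ is renormalizable: there is a degree-two polynomial-like restriction $(f_\lambda^{\circ n},U,V)$ hybrid equivalent to $P_c$, with filled-in Julia set $K_0\subset\subset U$. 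Let $K_j:=f_\lambda^{\circ j}(K_0)$ and let $\mathcal{K}$ be the union of all the small filled-in Julia sets of $f_\lambda$, so that $\mathcal{K}$ contains every free critical point of $f_\lambda$. The components of $\textup{Int}(K_\lambda)$ are exactly the Fatou components lying in the basin of the attracting (resp.\ parabolic) cycle of $f_\lambda$, and each of them eventually maps into $\mathcal{K}$ under $f_\lambda$; those actually contained in $\mathcal{K}$ are, through the hybrid equivalence $\phi$ (a homeomorphism on a neighbourhood of $K_0$, conformal on $\textup{Int}(K_0)$) together with the dynamics $f_\lambda^{\circ j}$, in a bijection with the bounded Fatou components of $P_c$ that preserves all incidences of closures.

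For the direction ``disjoint $\Rightarrow$ primitive'' I would argue contrapositively: if $\mathcal{H}$, hence $H$, is satellite, then Theorem \ref{well-known}(3) yields two distinct bounded Fatou components $V_1,V_2$ of $P_c$ with $\overline{V_1}\cap\overline{V_2}\neq\emptyset$, whence $\phi^{-1}(V_1)$ and $\phi^{-1}(V_2)$ are distinct components of $\textup{Int}(K_\lambda)$ inside $K_0$ whose closures meet. For the direction ``primitive $\Rightarrow$ disjoint'', assume $\mathcal{H}$, hence $H$, is primitive. Then Theorem \ref{well-known}(2), transported by $\phi$ and propagated by $f_\lambda$, gives $K_i\cap K_j=\emptyset$ for $i\neq j$, and Theorem \ref{well-known}(3) gives that the closures of the components of $\textup{Int}(K_\lambda)$ lying in a single $K_j$ are pairwise disjoint; hence all components of $\textup{Int}(K_\lambda)$ contained in $\mathcal{K}$ have pairwise disjoint closures.

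It remains to control the components of $\textup{Int}(K_\lambda)$ lying outside $\mathcal{K}$, namely the iterated $f_\lambda$-preimages of the small Fatou components, and this is the main obstacle. I would use two facts. (i) The only critical points of $f_\lambda$ are $0,\infty$ and the free critical points $\omega_j$; since $0,\infty\notin K_\lambda$ while every $\omega_j\in\mathcal{K}$, the map $f_\lambda$ has no critical point on $K_\lambda\setminus\mathcal{K}$, so every local branch of $f_\lambda^{-1}$ based at a point of $K_\lambda\setminus\mathcal{K}$ is univalent and therefore preserves distinctness and closure-incidence of Fatou components. (ii) By the Mandelbrot-like description of \cite{DH}, $J_\lambda$ is $\mathcal{K}$ together with countably many ``decorations,'' each of which is mapped univalently by some iterate $f_\lambda^{\circ r}$ onto a smaller sub-picture of the same kind and is attached to $\mathcal{K}$ at a single point that is an iterated preimage, within some $K_j$, of the $\beta$-fixed point of the renormalization; by \cite[Theorem 7.10]{Mc2} such points avoid the small Fatou components, so they lie on $\partial W$ for no component $W\subset\mathcal{K}$ of $\textup{Int}(K_\lambda)$, and two distinct decorations meet only at a common such attaching point. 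Given distinct $W_1,W_2$ in $\textup{Int}(K_\lambda)$ with $\overline{W_1}\cap\overline{W_2}\ni p$, a finite induction on the number of $f_\lambda$-steps needed to bring both into $\mathcal{K}$ then yields a contradiction: either $p$ already lies in the closure of a small Fatou component and in that of a decoration (or of a decoration of a decoration), contradicting (ii) and its self-similar pull-backs; or $p\notin\mathcal{K}$ is non-critical, so by (i) the images $f_\lambda(W_1),f_\lambda(W_2)$ are again distinct components with meeting closures, lowering the step count until one reaches the base case settled in the previous paragraph. I expect the genuinely delicate point to be making the decoration picture of \cite{DH} precise in this non-polynomial setting — that decorations attach to $\mathcal{K}$ only at iterated preimages of the $\beta$-fixed point of the renormalization, that these points miss the closures of the small Fatou components, and that decorations cannot accumulate on $\mathcal{K}$ so as to create new tangencies — which is exactly the information furnished by the Mandelbrot-like theory of \cite{DH} together with the $\beta$-separation of \cite[Theorem 7.10]{Mc2}.
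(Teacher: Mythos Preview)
The paper does not give a proof of this lemma at all: it simply states, in the sentence preceding the lemma, that it is ``a corollary of Theorem~\ref{well-known}\ldots\ benefited from the Mandelbrot-like theory studied in \cite{DH},'' and then moves on. Your proposal is precisely an attempt to unpack that one-line indication --- transport Theorem~\ref{well-known}(2),(3) through the straightening/hybrid equivalence supplied by the Mandelbrot-like family theory --- so the approaches coincide in spirit, with yours supplying the details the paper omits.

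That said, two of your details would need tightening before the argument is complete. First, for a McMullen map there are $l+m$ free critical points, and your $\mathcal{K}=\bigcup_j K_j$ built from a single degree-two polynomial-like restriction need not contain them all; one must also take into account the $(l+m)$-fold rotational symmetry of $f_\lambda$ and the resulting symmetric copies of the small filled Julia sets before claiming that $K_\lambda\setminus\mathcal{K}$ carries no critical points. Second, the ``decoration'' picture you invoke --- that components of $K_\lambda$ outside $\mathcal{K}$ attach to each $K_j$ only at iterated preimages of the $\beta$-fixed point --- is not something \cite{DH} states for rational (non-polynomial) maps; you have correctly identified this as the delicate step, but it is really an assertion about the global topology of $K_\lambda$ that \cite{DH} alone does not furnish. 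The paper sidesteps both issues by simply asserting the lemma as a consequence of the cited results.
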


\subsection{Sierpi\'{n}ski carpets in the non-escaping case}

Recall that $\lambda_0$ and $\lambda_1$ are defined in \eqref{equ-lambda-0-1}. Another equivalent definition of $\lambda_0$ and $\lambda_1$ is $\lambda_0=\sup\{\lambda:\lambda\in\Lambda_0\cap\mathbb{R}^+\}$ and $\lambda_1=\inf\{\lambda:\lambda\in\Lambda_\infty\cap\mathbb{R}^+\}$, where $\Lambda_0$ and $\Lambda_\infty$ denote the McMullen domain and the Cantor locus respectively.

\begin{thm}\label{QS-positive-restate}
If $\lambda\in [\lambda_0,\lambda_1]$, then $J_\lambda$ is q.s.\,equivalent to a round carpet if and only if one of the following holds:
\begin{enumerate}
\item The interior $\Int (K_\lambda)=\emptyset$ and $\lambda\neq\lambda_0$; or
\item $\lambda\in\mathcal{H}$, where $\mathcal{H}$ is a primitive component of a maximal copy of $M$ in $\Lambda$ such that $\mathcal{H}$ is not the image of $H_\heartsuit$ under the maximal homeomorphic map.
\end{enumerate}
\end{thm}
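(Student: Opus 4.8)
\emph{A plan for the proof.} The strategy is to split the equivalence into its two implications and, for the forward one, to reduce everything to Bonk's uniformization theorem (Theorem \ref{Bonk}). By Theorem \ref{Qiu-Xie-Yin}, for $\lambda\in[\lambda_0,\lambda_1]$ the Julia set $J_\lambda$ is locally connected, and $J_\lambda$ is a Sierpi\'{n}ski carpet exactly when $\lambda$ satisfies \textup{(I)} or $\lambda\in\mathcal{H}\cup r_{\mathcal{H}}$ for an admissible primitive component $\mathcal{H}$. Since a round carpet is homeomorphic to $J_\lambda$ only if $J_\lambda$ is itself a carpet, this already pins down the candidate parameters, and the whole problem becomes: (i) rule out $\lambda=r_{\mathcal{H}}$, and (ii) show that for the remaining parameters the peripheral circles of $J_\lambda$ --- the boundaries $\{\gamma_i\}_{i\in\N}$ of the Fatou components --- are uniform quasicircles and uniformly relatively separated.

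For the necessity part, suppose $J_\lambda$ is quasisymmetrically equivalent to a round carpet $R$. Then $J_\lambda$ is a carpet, so by Theorem \ref{Qiu-Xie-Yin} either \textup{(I)} holds or $\lambda\in\mathcal{H}\cup r_{\mathcal{H}}$, and I only need to exclude $\lambda=r_{\mathcal{H}}$. Here I would use two standard facts: a homeomorphism between Sierpi\'{n}ski carpets carries peripheral circles to peripheral circles (they are topologically intrinsic, \cite{Wh}), and a Jordan curve in $\EC$ quasisymmetrically equivalent to a round circle is a quasicircle. Together they force every peripheral circle of $J_\lambda$ to be a quasicircle. But when $\lambda=r_{\mathcal{H}}$ the map $f_\lambda$ has a parabolic cycle, and the boundary of the immediate parabolic basin --- a Jordan curve because $J_\lambda$ is a carpet --- has a cusp at the parabolic periodic point (local theory of parabolic fixed points, cf. \cite{Mi}), hence violates the bounded-turning condition \eqref{bound-turning}. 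This contradiction gives $\lambda\neq r_{\mathcal{H}}$.

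For the sufficiency part, assume \textup{(I)} or $\lambda\in\mathcal{H}$; by Theorem \ref{Bonk} it suffices to check uniform quasicircle and uniform relative separation for $\{\gamma_i\}_{i\in\N}$. First I would record that the closure of the free critical orbit is disjoint from $\partial B_\lambda$: in case \textup{(II)} because it converges to the renormalization cycle, which lies in $\textup{Int}(K_\lambda)$ and hence misses $J_\lambda\supset\partial B_\lambda$; in case \textup{(I)} because of the exclusion $\lambda\neq\lambda_0$ (implicit in Theorem \ref{Qiu-Xie-Yin} and its proof in \cite{QXY,Xi}). A quasiconformal surgery then shows that $\partial B_\lambda$, and in case \textup{(II)} also the boundaries of the periodic bounded Fatou components, are quasicircles; and since no critical value of $f_\lambda$ lies on any $\gamma_i$, the remaining $\gamma_i$ --- being holomorphic preimages of these --- are quasicircles as well. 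The real point is \emph{uniformity}: the only Fatou components carrying a critical point are $B_\lambda$, $T_\lambda$ and (in case \textup{(II)}) the periodic cycle of Fatou components absorbing the free critical orbit, each of which is periodic or lies one step above $B_\lambda$; so every $\gamma_i$ is the image of one of finitely many ``base'' peripheral circles $\gamma_{\mathrm{base}}$ under a single univalent map $\varphi_i$ (a branch of some $f_\lambda^{-j}$), with $\gamma_{\mathrm{base}}$ sitting in the domain of $\varphi_i$ inside a surrounding annulus of modulus $\geq\widetilde m>0$. Running the annulus construction of Theorem \ref{unif-quasicircle-sep} and applying Corollary \ref{unif-distor} once per $\gamma_i$ then gives uniform quasicircles, while Lemma \ref{mod-diam} applied to the core curves of the iterated preimages of a collar of $\partial B_\lambda$ gives uniform relative separation; here one also uses that the closures of the Fatou components are pairwise disjoint, automatic once $J_\lambda$ is a carpet (for the bounded components in case \textup{(II)} this disjointness is Lemma \ref{from-well-known}, which is exactly where primitivity of $\mathcal{H}$ enters). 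Theorem \ref{Bonk} then yields a quasisymmetric equivalence onto a round carpet. For the last assertion: if some free critical point of $f_\lambda$, $\lambda\in(\lambda_0,\lambda_1)$, is strictly preperiodic, then (using local connectivity of $J_\lambda$ to exclude Cremer cycles and $\textup{Int}(K_\lambda)=\emptyset$ to exclude parabolic ones) $f_\lambda$ is semi-hyperbolic, and semi-hyperbolicity supplies the uniform moduli above directly, placing $\lambda$ in case \textup{(I)} of the argument.

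The step I expect to be the genuine obstacle is the uniform lower bound on the moduli of the deep preimage annuli in case \textup{(I)} when a free critical point lies in $J_\lambda$: then an infinite nested chain of these annuli may each contain that critical point, the relevant restrictions of $f_\lambda$ stop being conformal, and the self-similarity underlying the uniform-modulus estimate breaks down. This happens precisely for the infinitely renormalizable parameters of case \textup{(I)}. To handle them I would invoke complex a priori bounds for the (real) renormalizations of $f_\lambda$ --- available because the parameter lies on the real slice $[\lambda_0,\lambda_1]$ --- in order to recover uniformly bounded moduli. Controlling this real-renormalization geometry, rather than the surgery or the Koebe-type distortion estimates, is the technical heart; it is also what produces the non-hyperbolic, indeed infinitely renormalizable, examples of Corollary \ref{non-hype-carpet}.
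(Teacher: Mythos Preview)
Your necessity argument (ruling out $\lambda=r_{\mathcal H}$ via the parabolic cusp) and your handling of case \textup{(II)} match the paper's. The substantive divergence is in case \textup{(I)}, where you anticipate that a free critical point sitting in $J_\lambda$ forces the pullback annuli to lose uniform modulus, and propose to repair this with complex a~priori bounds for the real renormalizations. That route is plausible in principle, but it imports heavy machinery that the paper sidesteps entirely.

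The paper's key observation is that for $\lambda\in[\lambda_0,\lambda_1]$ the closure of the free postcritical set $P(f_\lambda)$ is disjoint from $f_\lambda^{-1}(\overline{T}_\lambda)$. This comes from the real structure: via the hybrid equivalence from \cite{QXY,Xi} one has $\Psi_c(J_{P_c})\cap\R=[p_\lambda,q_\lambda]$, the orbit of $\omega_0$ lies on the positive real axis, and the components of $f_\lambda^{-1}(T_\lambda)$ (sitting around the $(l+m)$-th roots of $-\lambda$) avoid it; the symmetry of $f_\lambda$ then handles the remaining critical orbits. Consequently each component $U$ of $f_\lambda^{-1}(\overline{T}_\lambda)$ admits a simply connected neighborhood $V$ with $P(f_\lambda)\cap\overline{V}=\emptyset$. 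Since in case \textup{(I)} every Fatou component other than $B_\lambda$ eventually lands on $T_\lambda$, hence passes through some such $U$, the remaining pullback from $U$ to any peripheral circle is a \emph{univalent} branch defined on $V$, and the argument of Theorem~\ref{unif-quasicircle-sep} runs unchanged to give uniform quasicircles and uniform relative separation. No renormalization estimates are needed; the real symmetry already places the postcritical set away from the correct ``base level'' for the pullbacks, and this is precisely what makes the infinitely renormalizable examples behind Corollary~\ref{non-hype-carpet} come for free. Your plan is not wrong, but it misses this simplification and replaces a two-line geometric fact by a deep analytic input.
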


\begin{proof}
By Theorem \ref{Qiu-Xie-Yin}, we only need to prove the ``if" part and show that if $\lambda=r_\mathcal{H}$ for a primitive hyperbolic component $\mathcal{H}$ in $\Lambda$, then $J_\lambda$ cannot q.s. equivalent to a round carpet. The latter statement is obvious since if $\mathcal{H}$ is a primitive hyperbolic component, then the Fatou set of $f_{r_\mathcal{H}}$ contains a simply connected parabolic component with a cusp, which is not a quasicircle.

By \cite[Lemma 4.1]{QXY12} or \cite[Lemma 4.15]{Xie11}, there exists a maximal homeomorphic copy $\Phi:M\rightarrow\mathcal{M}$ defined from the Mandelbrot set into $\Lambda$, such that $\mathcal{M}\cap \mathbb{R}^+=[\lambda_0,\lambda_1]$. In particular, $\Phi([-2,1/4])=[\lambda_0,\lambda_1]$. Moreover, for each $c\in[-2,1/4]$, there exists a quasiconformal homeomorphism $\Psi_c$ defined in a neighborhood of the Julia set $J_{P_c}$ of $P_c$ such that $\Psi_c(J_{P_c})\subset K_{\Phi(c)}$ and $\Psi_c(J_{P_c})\cap\mathbb{R}=[p_{\Phi(c)},q_{\Phi(c)}]$, where $0<p_\lambda<q_\lambda$ are two real numbers such that $f_\lambda(p_\lambda)=f_\lambda(q_\lambda)=q_\lambda$ whenever $\lambda\in[\lambda_0,\lambda_1]$. In fact, $p_\lambda=\sup\{z:z\in T_\lambda\cap\mathbb{R}^+\}$ and $q_\lambda=\inf\{z:z\in B_\lambda\cap\mathbb{R}^+\}$, where $B_\lambda$ is the immediate attracting basin of $\infty$ and $T_\lambda$ is the Fatou component containing $0$ (see Figure \ref{Fig_McMullen}).

Let $P(f_\lambda):=\textup{Closure}{\{f_\lambda^{\circ k}(\omega_j):0\leq j<\ell+m,k>0\}}$ be the closure of the free postcritical set of $f_\lambda$. From the discussion of last paragraph, we know that $P(f_\lambda)$ is disjoint from $f_\lambda^{-1}(\overline{T}_\lambda)$ since the closure of the forward orbit of the critical point $\omega_0$ is contained in the positive real axis while $f_\lambda^{-1}(\overline{T}_\lambda)\cap\R^+=\emptyset$. By the symmetry of $f_\lambda$, this means that for each component $U$ of $f_\lambda^{-1}(\overline{T}_\lambda)$, there exists a simply connected open neighborhood $V$ of $U$ such that $P(f_\lambda)\cap \overline{V}=\emptyset$.

If $\Int (K_\lambda)=\emptyset$,  except $B_\lambda$, all the Fatou components are iterated onto $T_\lambda$ eventually. By a completely similar argument as in the proof of Theorem \ref{unif-quasicircle-sep}, it can be shown that the peripheral circles of $J_\lambda$ are uniform quasicircles and uniformly relatively separated since the boundary $\partial B_\lambda$ is a quasicircle (see \cite[Lemma 3.8]{QXY12} or \cite[Lemma 4.14]{Xie11}). This means that if $\Int (K_\lambda)=\emptyset$ and $\lambda\neq\lambda_0$, then $J_\lambda$ is q.s. equivalent to a round carpet.

If $\lambda\in\mathcal{H}$ such that $J_\lambda$ is a Sierpi\'{n}ski carpet, then $f_\lambda$ is hyperbolic. A completely similar argument as Theorem \ref{unif-quasicircle-sep} can be shown that  $J_\lambda$ is q.s. equivalent to a round carpet. This completes the proof of Theorems \ref{QS-positive-restate} and \ref{QS-positive}.
\end{proof}

\begin{figure}[!htpb]
  \setlength{\unitlength}{1mm}
  \centering
  \includegraphics[width=55mm]{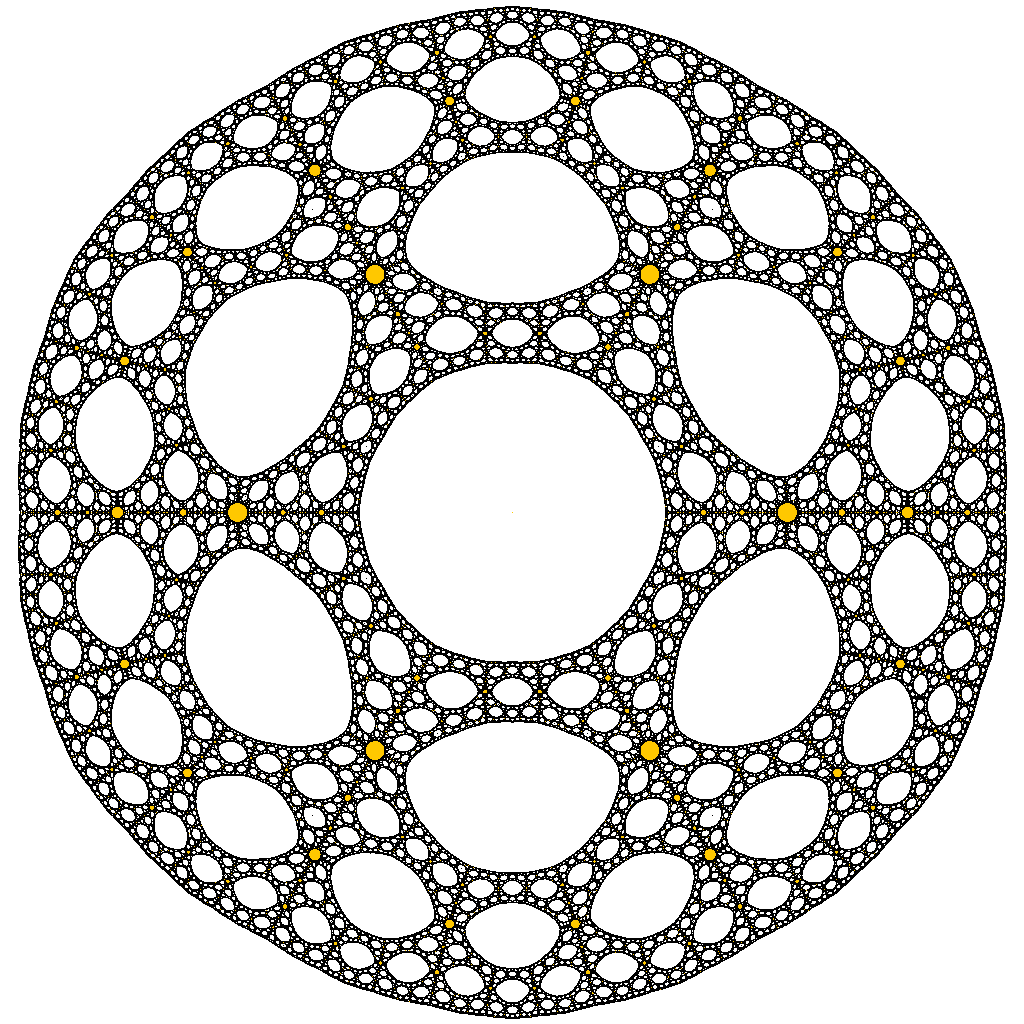}\hskip0.2cm
  \includegraphics[width=55mm]{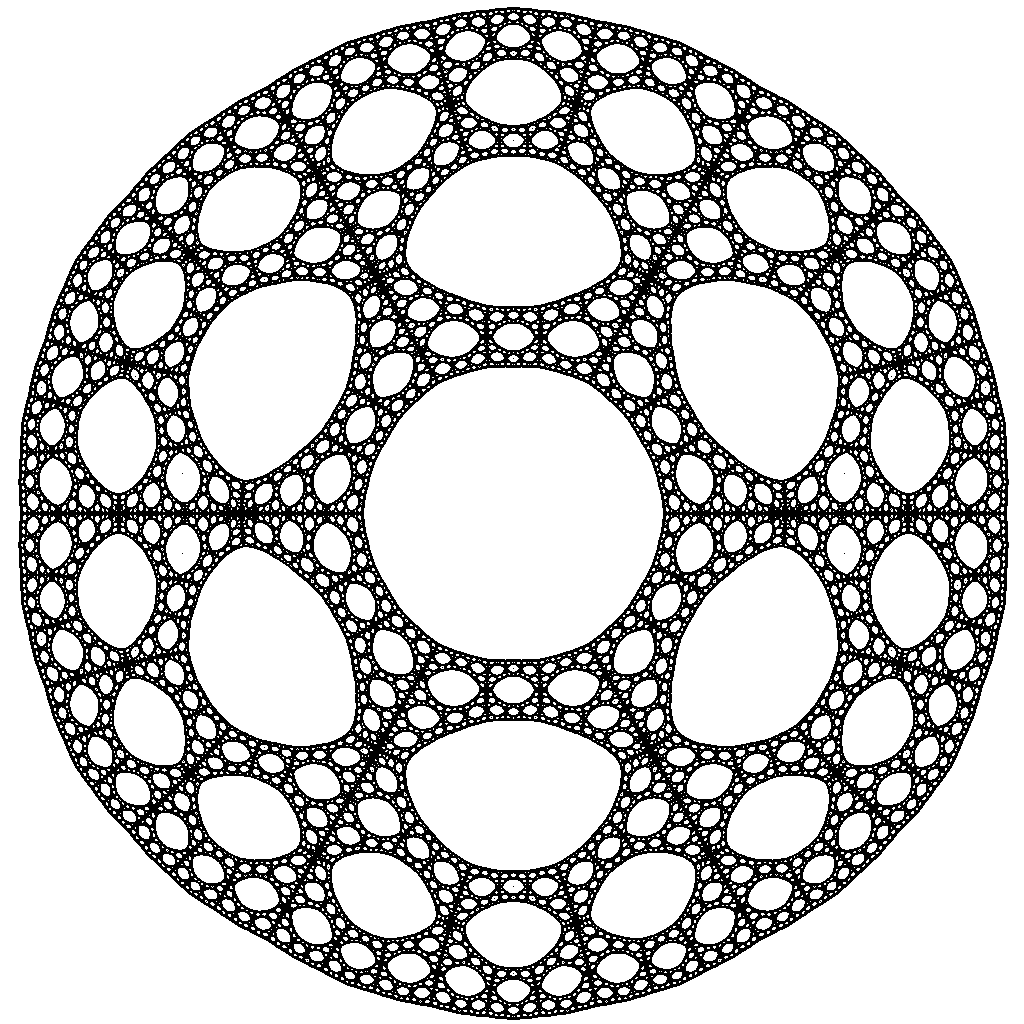}
  \caption{The Julia sets of $f_\lambda(z)=z^3+\lambda/z^3$ with $\lambda_1\approx 0.02749275$ and $\lambda_2\approx 0.02583244$ (from left to right). The parameters $\lambda_1$ and $\lambda_2$ are chosen such that all the free critical points of $f_\lambda$ are iterated to a super-attracting and pre-periodic orbits respectively. Both Julia sets are Sierpi\'{n}ski carpets and they are q.s. equivalent to some round carpets.}
  \label{Fig_McMullen}
\end{figure}

\begin{proof}[{Proof of Corollary \ref{non-hype-carpet}}]
If $\lambda\in(\lambda_0,\lambda_1)$ such that one of the free critical points of $f_\lambda$ is strictly pre-periodic, then all the free critical points of $f_\lambda$ is strictly pre-periodic by the symmetry. Hence $\Int (K_\lambda)=\emptyset$ and $J_\lambda$ is q.s. equivalent to a round carpet by Theorem \ref{QS-positive-restate}(a). Note that the interval $(\lambda_0,\lambda_1)$ in $\Lambda$ corresponds to the interval $(-2,1/4)$ in the Mandelbrot set. Again by Theorem \ref{QS-positive-restate}(a), there exists an infinitely renormalizable $f_\lambda$ whose Julia set is q.s. equivalent to a round carpet.
\end{proof}

In the following, we assume that $n:=\ell=m$ such that the degree of $f_\lambda$ is $2n$, where $n\geq 3$. The following theorem is a weaker version of Theorem 1.3 in \cite{QWY12}.

\begin{thm}[\cite{QWY12}]\label{Qiu-Wang-Yin}
Suppose that the Julia set $J_\lambda$ is connected and the critical orbit of $f_\lambda$ does not accumulate on the boundary $\partial B_\lambda$, then $J_\lambda$ is locally connected.
\end{thm}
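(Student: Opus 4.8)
The plan is to build a Yoccoz puzzle for $f_\lambda$ and to show that the puzzle pieces containing an arbitrary point of $J_\lambda$ shrink to that point. Since a nested sequence of puzzle pieces whose spherical diameters tend to $0$ gives a neighbourhood basis of connected sets, local connectivity of $J_\lambda$ follows immediately, and this also yields local connectivity of the boundaries of all Fatou components along the way.

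First I would set up the outer combinatorial frame. Using the B\"ottcher coordinate $\phi_\lambda\colon B_\lambda\to\EC\setminus\overline{\D}$ conjugating $f_\lambda|_{B_\lambda}$ to $z\mapsto z^{n}$, I fix an equipotential curve together with the finitely many external rays landing at a suitably chosen repelling (pre)periodic point and all their $f_\lambda$-preimages; the $\Z/2n$ symmetry $f_\lambda(e^{\pi i/n}z)=-f_\lambda(z)$ makes the resulting graph, and the position of the $2n$ free critical points relative to it, completely explicit. The hypothesis that the free critical orbit stays at a definite distance from $\partial B_\lambda$ enters here: a routine expansion argument near $\partial B_\lambda$, where $f_\lambda$ has no critical points, shows that $\partial B_\lambda$ is a quasicircle and lets one pass to a \emph{thickened} puzzle, so that each puzzle piece of depth $k$ is compactly contained in the piece of depth $k-1$ containing it; it also keeps the combinatorics near the trap door $T_\lambda=f_\lambda^{-1}(B_\lambda)\setminus B_\lambda$ under control.

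Next I would prove the shrinking of puzzle pieces. Because the $2n$ free critical orbits coincide up to rotation there is, in effect, a single critical orbit to control, and two regimes appear. If $z\in J_\lambda$ is a point whose forward orbit does not combinatorially shadow the critical point infinitely often, the nested pieces about $z$ shrink by the Shrinking Lemma: the relevant first-return maps are univalent with uniformly bounded distortion, controlled via Koebe distortion and the moduli of the annuli separating consecutive critical-free pieces, in the spirit of Corollary~\ref{unif-distor} and Lemma~\ref{mod-diam}. The essential case is $z$ equal to an eventual preimage of a free critical point. When $f_\lambda$ is renormalizable, one restricts to the polynomial-like map and invokes the corresponding local connectivity theorem for quadratic-like maps by straightening, the non-accumulation hypothesis guaranteeing that the small filled Julia sets stay away from $\partial B_\lambda$ so that the reduction is legitimate. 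When $f_\lambda$ is non-renormalizable, one runs the principal-nest (or enhanced-nest) construction around the critical point and proves that the surrounding annuli have moduli bounded below --- complex \emph{a priori} bounds --- using the Kahn--Lyubich covering lemma, or equivalently the Kozlovski--Shen--van Strien combinatorial estimates, transplanted from the polynomial setting.

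The hard part is precisely this last step: securing non-degenerate annuli around the critical nest in the recurrent, non-renormalizable case. The difficulty is twofold. First, the polynomial machinery must be adapted to a genuine rational map which, besides the free critical point, carries the superattracting fixed point $\infty$ with its basin $B_\lambda$ and the trap door $T_\lambda$, so that the puzzle combinatorics are those of a rational rather than polynomial dynamical system. Second, one must exclude the degeneration of a nest annulus caused by the critical orbit drifting towards $\partial B_\lambda$ --- which is exactly what the hypothesis of the theorem forbids. Once these bounds are established, summing moduli along the nest forces the critical puzzle pieces to shrink, and pulling back along $f_\lambda$ yields shrinking at every point of $J_\lambda$, which completes the proof.
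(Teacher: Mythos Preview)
The paper does not prove this theorem. It is quoted as an external input from \cite{QWY} (Qiu, Wang and Yin, \textit{Adv.\ Math.}\ 2012), introduced by the sentence ``The following theorem is a weaker version of Theorem 1.3 in \cite{QWY}.'' There is therefore no proof in the present paper to compare your outline against.

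For what it is worth, your sketch is a faithful summary of the strategy actually carried out in \cite{QWY}: a Yoccoz puzzle for the McMullen family built from the B\"ottcher coordinate on $B_\lambda$ and suitable external rays, a thickening step exploiting that $\partial B_\lambda$ is a Jordan curve when the critical orbit stays away from it, and the standard dichotomy --- combinatorially non-recurrent points handled by distortion and shrinking, recurrent critical points handled via principal-nest complex bounds (Kahn--Lyubich / Kozlovski--Shen--van Strien), with renormalizable parameters reduced to the quadratic-like local-connectivity theorem. So your plan is correct in outline, but it is a plan for reproducing the cited reference, not for anything the present paper itself undertakes.
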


Let $\mathcal{P}$ and $\mathcal{Q}$ denote the collections of the Fatou components contained in $\EC \setminus K_\lambda$ and $K_\lambda$ respectively.

\begin{lema}\label{pts-on-bdy-B}
Let $\mathcal{H}$ be a primitive hyperbolic component in $\Lambda$ such that $\mathcal{H}$ is not the image of $H_\heartsuit$ under the maximal homeomorphic map. For $\lambda\in\mathcal{H} \cup \{r_{\mathcal{H}}\}$, then $\overline{U}\cap \overline{B}_\lambda=\emptyset$, where $U\in\mathcal{Q}$ is any Fatou component contained in $K_\lambda$.
\end{lema}

\begin{proof}
Suppose that $\overline{U}\cap \overline{B}_\lambda\neq\emptyset$. Up to iterate several times, we can assume that $U$ is invariant under $f_\lambda$ since $f_\lambda(B_\lambda)=B_\lambda$. Firstly, we claim that $\overline{U}\cap \overline{B}_\lambda=\{z_0\}$ is a fixed point of $f_\lambda$. In fact, since $\partial B_\lambda$ is a Jordan curve \cite[Theorem 1.1]{QWY12}, there exists a homeomorphism $\gamma:\mathbb{T}\rightarrow\partial B_\lambda$ such that $f_\lambda(\gamma(e^{\ii t}))=\gamma(e^{\ii nt})$. If $\sharp (\overline{U}\cap \overline{B}_\lambda)\geq 2$, then there exist $t_1<t_2$ such that $\gamma(e^{\ii t_1}),\gamma(e^{\ii t_2})\in\overline{U}\cap \overline{B}_\lambda$ and $\overline{U}\cap \overline{B}_\lambda\subset\gamma([e^{\ii t_1},e^{\ii t_2}])$, where $0\leq t_2-t_1\leq 2\pi/n$ by the symmetry of the Fatou components of $f_\lambda$. Since $U$ is invariant, then $f_\lambda(\gamma([e^{\ii t_1},e^{\ii t_2}]))=\gamma([e^{\ii t_1},e^{\ii t_2}])$. However, for any subarc $I\subset \overline{B}_\lambda$, there exists $k\geq 0$ such that $f_\lambda^{\circ k}(I)=\partial B_\lambda$ since $\partial B_\lambda$ is a Jordan curve and $f_\lambda$ is conjugated to $z\mapsto z^n$ on $\partial B_\lambda$. This is a contradiction and the claim holds.

From \cite[\S 6]{Ste06}, there exists a maximal copy $\mathcal{M}$ of the Mandelbrot set in the non-escaping locus $\Lambda$, such that $\lambda\in\mathcal{H} \cup \{r_{\mathcal{H}}\}\subset\mathcal{M}$, where $\mathcal{H}$ is a hyperbolic component in $\Lambda$. Let $\Phi:M\rightarrow\mathcal{M}$ be the maximal homeomorphic map as before. Then $\{z_0\}=\overline{U}\cap \overline{B}_\lambda$ is the quasiconformal image of the $\beta$-fixed point of $P_{\Phi^{-1}(\lambda)}$. Since the $\beta$-fixed point is disjoint with the small Julia sets, this contradicts with the assumption $\mathcal{H}\neq \Phi(H_\heartsuit)$.
\end{proof}

\begin{proof}[{Proof of Theorem \ref{QS-complex}}]
We first prove that if $\lambda$ satisfies (a) or (b), then $J_\lambda$ is a Sierpi\'{n}ski carpet. If $\Int (K_\lambda)=\emptyset$, then $\mathcal{Q}_\lambda=\emptyset$. By Theorem \ref{Qiu-Wang-Yin}, we only need to prove that $\overline{V}_1 \cap \overline{V}_2=\emptyset$ for any different $V_1,V_2\in\mathcal{P}$. We first show $\overline{B}_\lambda\cap \overline{T}_\lambda=\emptyset$. If not, one can derive out that $z_0\in\partial B_\lambda\cap \partial T_\lambda$ is a critical point of $f_\lambda$ as the proof of \cite[Propostion 4.3]{DLU05}, which contradicts with the assumption in (a). Suppose that $V_i$ is one of the components of $f_\lambda^{-k_i}(B_\lambda)\setminus f_\lambda^{-(k_i-1)}(B_\lambda)$, where $i=1,2$ and $1\leq k_1\leq k_2$. We divide the argument into two cases. Firstly, if $k_1=k_2$ and $V_1\neq V_2$, then $\overline{V}_1 \cap \overline{V}_2=\emptyset$. Otherwise, there exists some critical point in the forward orbit of $\partial V_1 \cap \partial V_2$, which contradicts with the assumption in (a). If $k_1<k_2$, then $B_\lambda=f_\lambda^{\circ (k_2-1)}(V_1)$ and $T_\lambda=f_\lambda^{\circ (k_2-1)}(V_2)$. This means that $\overline{V}_1 \cap \overline{V}_2=\emptyset$ since $\overline{B}_\lambda \cap \overline{T}_\lambda=\emptyset$. Therefore, $J_\lambda$ is a Sierpi\'{n}ski curve if $\Int (K_\lambda)=\emptyset$ and the closure of the free critical orbits does not intersect with $\partial B_\lambda$.

If $\lambda\in\mathcal{H} \cup \{r_{\mathcal{H}}\}$, where $\mathcal{H}$ is a primitive hyperbolic component of a copy of $M$ in $\Lambda$ such that $\mathcal{H}$ is not the image of $H_\heartsuit$ under the maximal homeomorphic map. By Lemma \ref{from-well-known}, it follows that the closure of the Fatou components in $\mathcal{Q}$ are disjoint to each other. Moreover, a similar argument as in the previous paragraph shows that the closure of the Fatou components in $\mathcal{P}$ are disjoint to each other. We only need prove that the closure of Fatou components in $\mathcal{P}$ and $\mathcal{Q}$ are disjoint to each other.

Let $U\in\mathcal{Q}$ be a Fatou component which lies in $K_\lambda$. By Lemma \ref{pts-on-bdy-B}, it follows that $\overline{U}\cap \overline{V}=\emptyset$ for every $V\in\mathcal{P}$ since $\overline{U}\cap \overline{B}_\lambda=\emptyset$. This means that the closures of any different Fatou components of $f_\lambda$ are disjoint to each other. Therefore, $J_\lambda$ is a Sierpi\'{n}ski carpet.

If $\lambda$ lies in a hyperbolic component of $\Lambda$, then $J_\lambda$ is connected and the critical orbits of $f_\lambda$ cannot intersect with the boundary $\partial B_\lambda$ since the free critical orbits lie in some attracting periodic Fatou components. By applying a similar argument as in Theorem \ref{unif-quasicircle-sep}, it can be shown that the peripheral circles of $J_\lambda$ are uniform quasicircles and they are uniformly relatively separated since the boundary $\partial B_\lambda$ is a quasicircle (see \cite[Theorem 1.2]{QWY12}). If $r_{\mathcal{H}}$ is a root point of a primitive hyperbolic component, then the Fatou set of $f_{r_\mathcal{H}}$ contains a simply connected parabolic component with a cusp, which cannot be a quasicircle. So $J_{r_\mathcal{H}}$ cannot q.s. equivalent to a round carpet.
\end{proof}

\bibliographystyle{amsalpha}
\bibliography{E:/Latex-model/Ref1}

\end{document}